\documentclass[11pt,oneside,english,jou]{amsart}
\usepackage[T1]{fontenc}
\usepackage[latin9]{inputenc}
\usepackage{verbatim}
\usepackage{amsthm}
\usepackage{amssymb}

\makeatletter
\numberwithin{equation}{section}
\numberwithin{figure}{section}
  \theoremstyle{plain}
  \newtheorem*{thm*}{Theorem}
\theoremstyle{plain}
\newtheorem{thm}{Theorem}[subsection]
  \theoremstyle{definition}
  \newtheorem{defn}[thm]{Definition}
  \theoremstyle{plain}
  \newtheorem{prop}[thm]{Proposition}
  \theoremstyle{plain}
  \newtheorem{lem}[thm]{Lemma}
 \theoremstyle{definition}
  \newtheorem{example}[thm]{Example}
  \theoremstyle{remark}
  \newtheorem{rem}[thm]{Remark}
  \theoremstyle{remark}
  \newtheorem{claim}[thm]{Claim}
  \theoremstyle{plain}
  \newtheorem{question}[thm]{Question}


\newcommand{\N}{\mathbb{N}}

\linespread{1.6}

\makeatother

\usepackage{babel}

\begin{document}

\title{On Relative Extreme Amenability}

\subjclass[2010]{Primary: 54H20. Secondary: 05D10, 22F05, 37B05}
\keywords{Topological groups actions, extreme amenability, universal minimal space, Kechris-Pestov-Todorcevic correspondance, Fra\"{i}ss\'e theory}
\date{October, 2013}

\author{Yonatan Gutman \& Lionel Nguyen Van Th\'e}
\begin{abstract} The purpose of this paper is to study the notion of \textit{relative extreme amenability} for pairs of topological groups. We give a characterization by a fixed point
property on universal spaces. In addition we introduce the concepts
of an \textit{extremely amenable interpolant} as well as \textit{maximally
relatively extremely amenable} pairs and give examples. It is shown
that relative extreme amenability does not imply the existence of
an extremely amenable interpolant. The theory is applied to generalize
results of \cite{KPT05} relating to the application of Fra\"{i}ss\'e theory
to theory of Dynamical Systems. In particular, new conditions enabling
to characterize universal minimal spaces of automorphism groups of
Fra\"{i}ss\'e structures are given.
\end{abstract}
\maketitle

\section{Introduction}

The goal of this paper is to study the notion of \textit{relative
extreme amenability}: a pair
of topological groups $H\subset G$ is called relatively extremely
amenable if whenever $G$ acts continuously on a compact space, there is an $H$-fixed
point. This notion was isolated by the second author while investigating transfer properties between Fra\"{i}ss\'e theory and dynamical systems along the lines of \cite{KPT05}, and the corresponding results appears in \cite{NVT-}. 
We now provide a short description of the contents of the present article
and some of the results. Section \ref{sec:Preliminaries} contains
notation. Subsection \ref{sub:Universal-spaces.} recalls the notion
of \textit{universal spaces. }In subsection\textit{ \ref{sub:A-Characterization-Rea}}
it is shown that $(G,H)$ is relatively extremely amenable if and
only if there exists a universal $G$-space with a $H$-fixed point.
In subsection \ref{sub:Extremely-Amenable-Interpolants} the notion
of \textit{extremely amenable interpolant} is introduced and an example
of a non trivial interpolant is given. Subsection \ref{sub:Order-fixing-groups}
contains technical lemmas. In subsection \textit{\ref{sub:Maximally-Rea}
}the notions of \textit{maximal relative extreme amenability }and
\textit{maximal extreme amenability} are introduced and illustrated.
It is also shown that relative extreme amenability does not imply
the existence of an extremely amenable interpolant and that $Aut(\mathbb{Q},<)$
is maximally extremely amenable in $S_{\infty}$. Subsections \ref{sub:Fraisse-Theory}
and \ref{sub:weak-ordering-Property} deal with applications to a
beautiful theory developed in \cite{KPT05} - the application of Fra\"{i}ss\'e
theory to the theory of Dynamical Systems. In subsection \ref{sub:Fraisse-Theory}
the following theorem is shown (see subsection for the definitions
of the various terms appearing in the statement):
\begin{thm*}
Let $\{<\}\subset L$,$L_{0}=L\setminus\{<\}$ be signatures, $K_{0}$
a Fra\"{i}ss\'e class in $L_{0}$, $K$ an order Fra\"{i}ss\'e expansion of $K$ in $L$, $F_{0}=Flim(K_{0})$,
$F=Flim(K)$. Let $G_{0}=Aut(F_{0})$ and $G=Aut(F)$.
Denote $<^{F}=<_{0}$ and $X_{K}=\overline{G_{0}<_{0}}$\textup{.
$(G_{0},G)$ is }relatively extremely amenable and $Fix_{X_{K}}(G)$
is transitive w.r.t $X_{K}$\textup{ if and only if $X_{K}$ is the
universal minimal space of $G_{0}$}.
\end{thm*}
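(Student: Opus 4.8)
The plan is to prove the equivalence by treating the two implications separately, using throughout the identification
$G=\{g\in G_{0}:g\cdot<_{0}\,=\,<_{0}\}=\mathrm{Stab}_{G_{0}}(<_{0})$: an automorphism of $F=(F_{0},<_{0})$ is exactly an automorphism of $F_{0}$ preserving $<_{0}$, so $G$ is the stabilizer of the point $<_{0}$ for the action of $G_{0}$ on the space of linear orderings of $F_{0}$. Consequently $<_{0}\in X_{K}$ is a $G$-fixed point with dense $G_{0}$-orbit, $X_{K}$ is a nonempty compact metrizable $G_{0}$-flow, and the orbit map $g\mapsto g\cdot<_{0}$ exhibits $X_{K}$ as a $G_{0}$-compactification of the homogeneous space $G_{0}/G$; the defining properties of an order Fra\"{i}ss\'e expansion (in particular reasonableness) are what identify $X_{K}$ with the completion $\widehat{G_{0}/G}$ of $G_{0}/G$ for the appropriate group uniformity. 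I will also use that $Fix_{X_{K}}(G)$ contains $<_{0}$, hence is nonempty, and that ``$Fix_{X_{K}}(G)$ is transitive w.r.t.\ $X_{K}$'' means that every point of $Fix_{X_{K}}(G)$ has dense $G_{0}$-orbit in $X_{K}$.

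Suppose first that $X_{K}=M(G_{0})$, the universal minimal flow of $G_{0}$. For relative extreme amenability: any minimal $G_{0}$-flow $N$ is, by the defining universal property of $M(G_{0})=X_{K}$, a factor of $X_{K}$ via a continuous $G_{0}$-map $\pi$, and since $\pi$ is $G$-equivariant, $\pi(<_{0})$ is a $G$-fixed point of $N$; as every $G_{0}$-flow contains a minimal subflow, every $G_{0}$-flow has a $G$-fixed point, i.e.\ $(G_{0},G)$ is relatively extremely amenable. For the transitivity of $Fix_{X_{K}}(G)$: since $X_{K}=M(G_{0})$ is minimal, every point of $X_{K}$ --- a fortiori every point of $Fix_{X_{K}}(G)$ --- has dense $G_{0}$-orbit.

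Conversely, assume $(G_{0},G)$ is relatively extremely amenable and $Fix_{X_{K}}(G)$ is transitive w.r.t.\ $X_{K}$. First I would show $X_{K}$ is minimal: given a minimal subflow $N\subseteq X_{K}$, relative extreme amenability supplies a $G$-fixed point $y\in N$; transitivity gives $\overline{G_{0}\cdot y}=X_{K}$, while $y\in N$ forces $\overline{G_{0}\cdot y}\subseteq N$, so $N=X_{K}$, and $X_{K}$ has no proper subflow. Then I would prove universality: let $N$ be any minimal $G_{0}$-flow and, using relative extreme amenability once more, choose a $G$-fixed point $n_{0}\in N$. Since $G=\mathrm{Stab}_{G_{0}}(<_{0})\subseteq\mathrm{Stab}_{G_{0}}(n_{0})$, the rule $g\cdot<_{0}\mapsto g\cdot n_{0}$ is a well-defined, $G_{0}$-equivariant, surjective map from the dense orbit $G_{0}\cdot<_{0}\cong G_{0}/G$ onto $G_{0}\cdot n_{0}$; the crucial point is that it is uniformly continuous for the uniformity $X_{K}$ induces on $G_{0}\cdot<_{0}$, so it extends to a continuous $G_{0}$-map $X_{K}\to N$, necessarily onto because $N$ is minimal. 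Hence $X_{K}$ is a minimal $G_{0}$-flow which maps onto every minimal $G_{0}$-flow, so by uniqueness of the universal minimal flow $X_{K}=M(G_{0})$.

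The step I expect to be the main obstacle is the extension in the universality argument: showing that $g\cdot<_{0}\mapsto g\cdot n_{0}$ really does extend continuously over all of $X_{K}=\overline{G_{0}\cdot<_{0}}$, equivalently that $X_{K}$, as a $G_{0}$-compactification of $G_{0}/G$, dominates the compactification of $G_{0}/G$ determined by $N$. This rests on the identification of $X_{K}$ with the completion $\widehat{G_{0}/G}$ and on the uniform continuity of the orbit maps of compact $G_{0}$-flows; it is the analogue of the point where, in the Kechris--Pestov--Todorcevic theorem, the Ramsey property enters through extreme amenability of $Aut(F)$. Here that input is weakened to relative extreme amenability of $(G_{0},G)$, whose only role is to produce a $G$-fixed point $n_{0}$ in \emph{every} minimal $G_{0}$-flow --- without which the map cannot even be written down --- together with the lemmas on order-fixing groups of Subsection~\ref{sub:Order-fixing-groups}, which effect the identification $X_{K}\cong\widehat{G_{0}/G}$. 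It is worth recording the elementary fact used above in both directions: in the presence of relative extreme amenability, ``$Fix_{X_{K}}(G)$ is transitive w.r.t.\ $X_{K}$'' is equivalent to minimality of $X_{K}$ (one direction is immediate, the other is the minimality argument just given); stating the hypothesis in the weaker $Fix$-form is what makes the theorem the correct relative generalization of the Kechris--Pestov--Todorcevic criterion.
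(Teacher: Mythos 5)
Your forward direction and your derivation of minimality from relative extreme amenability plus transitivity of $Fix_{X_{K}}(G)$ are correct, and they coincide with what the paper does: the former is essentially Proposition \ref{thm:rea=00003D00003DFix on Universal}, and the latter is exactly the argument of Proposition \ref{thm:rea+transitive-->universal minimal} (take a minimal subflow $N$, find a $G$-fixed point in it, use transitivity to force $N=X_{K}$). The structural difference lies in how universality of $X_{K}$ is obtained from relative extreme amenability. The paper does not prove that implication at all: it invokes, as a black box, the equivalence ``$X_{K}$ is universal if and only if $(G_{0},G)$ is relatively extremely amenable'', announced in Subsection \ref{sub:Fraisse-Theory} as a result of the forthcoming paper \cite{NVT-}; given that equivalence, the whole theorem reduces to Proposition \ref{thm:rea+transitive-->universal minimal}, and that reduction is the paper's entire proof.

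You instead attempt to prove the implication from scratch, and that is where the genuine gap sits. The map $g\cdot<_{0}\mapsto g\cdot n_{0}$ is indeed well defined and $G_{0}$-equivariant on the orbit, but its continuous extension to $X_{K}=\overline{G_{0}\cdot<_{0}}$ is not a formality: it requires knowing that the uniformity $X_{K}$ induces on the orbit of $<_{0}$ refines the quotient of the right uniformity on $G_{0}/G$, i.e.\ the identification $X_{K}\cong\widehat{G_{0}/G}$. You flag this yourself as ``the main obstacle'' and do not close it; yet that identification is precisely the content of the result quoted from \cite{NVT-} (combinatorially, it is where the relative Ramsey property and the expansion structure of $K$ over $K_{0}$ enter), so what remains unproved is the entire nontrivial half of the theorem. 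Moreover, the lemmas of Subsection \ref{sub:Order-fixing-groups}, which you cite as effecting the identification, do nothing of the sort: they compute $Fix_{LO(\mathbb{Z})}(Stab(<))$ and $Fix_{LO(\mathbb{Q})}(Aut(\mathbb{Q},<))$ for the two concrete examples used in Subsection \ref{sub:Maximally-Rea} and play no role in the general Fra\"{i}ss\'e setting. To repair the argument you must either carry out the completion argument in full (right uniform continuity of orbit maps, precompactness of $G_{0}/G$, and the matching of the two uniformities on the dense orbit), or do as the paper does and quote the universality--relative-extreme-amenability equivalence before reducing to Proposition \ref{thm:rea+transitive-->universal minimal}.
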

In subsection \ref{sub:weak-ordering-Property} the \textit{weak ordering
property} is introduced and it is proven that if $(G_{0},G)$ is relatively
extremely amenable then the weak ordering property implies the ordering
property. Finally in subsection \ref{sub:Conjecture.} a question is formulated.

\

\textbf{Ackowledgements}: This project began while both of us were attending the thematic program on Asymptotic and Geometric Analysis taking place in Fall 2010 at the Fields Institute in Toronto. We would therefore like to ackowledge the support of the Fields Institute, and thank the organizers Vitali Milman, Vladimir Pestov and Nicole Tomczak-Jaegermann for having made this work possible. We would also like to thank Todor Tsankov for mentioning Peter Cameron's article \cite{Cam76} regarding Theorem \ref{thm:AutQ}.

\section{\label{sec:Preliminaries}Preliminaries}

We denote by $(G,X)$ a topological dynamical system (t.d.s), where
$G$ is a (Hausdorff) topological group and $X$ is a compact (Hausdorff)
topological space. We may also refer to $X$ as a $G$-space. If it
is desired to distinguish a specific point $x_{0}\in X$, we write
$(G,X,x_{0}).$ Given a continuous action $(G,X)$ and $x\in X$,
denote by $Stab{}_{G}(x)=Stab(x)=\{g\in G\,|\, gx=x\}\subset G$,
the subgroup of elements of $G$ fixing $x$, and for $H\subset G$
denote by $Fix_{X}(H)=Fix(H)=\{x\in X\,|\,\forall h\in H\, hx=x\}\subset X$,
the set of elements of $X$, fixed by $H$. Note that $Fix_{X}(H)$
is a closed set. Given a linear order $<$ on a set $D$, we denote
by $<^{*}$ the linear ordering defined on $D$ by $a<^{*}b\Leftrightarrow b<a$ for all
$a,b\in D.$

\section{Results}

\subsection{\label{sub:Universal-spaces.}Universal spaces.}

Let $G$ be a topological group. The topological dynamical system
(t.d.s.) $(G,X)$ is said to be \textbf{minimal} if $X$ and $\emptyset$
are the only $G$-invariant closed subsets of $X$. By Zorn's lemma
each $G$-space contains a minimal $G$-subspace. $(G,X)$ is said
to be \textbf{universal} if any minimal $G$-space $Y$ is a $G$-factor
of $X$. One can show there exists a minimal and universal $G$-space
$U_{G}$ unique up to isomorphism. $(G,U_{G})$ is called the \textbf{universal
minimal space of $G$} (for existence and uniqueness see for example \cite{Usp01} 
, or the more recent \cite{GL11}).
$(G,X,x_{0})$ is said to be \textbf{transitive} if $\overline{Gx_{0}}=X$.
One can show there exists a transitive t.d.s $(G,A_{G},a_{0})$, unique
up to isomorphism, such that for any transitive t.d.s $(G,Y,y_{0})$,
there exists a $G$-equivariant mapping $\phi_{Y}:(G,A_{G},a_{0})\rightarrow(G,Y,y_{0})$
such that $\phi(a_{0})=y_{0}$. $(G,A_{G},a_{0})$ is called the \textbf{greatest
ambit}. Because any minimal subspace of $A_{G}$ is isomorphic to
the universal minimal space, $A_{G}$ is universal. Note that if $A_{G}$
is not minimal (e.g., this is the case if $A_{G}$ is not distal see
\cite{dV93} IV(4.35)), then it is an example of a \textit{non-minimal}
universal space.

\subsection{\label{sub:A-Characterization-Rea}A Characterization of Relative
Extreme Amenability}

Recall the following classical definition (originating in \cite{M66}):
\begin{defn}
Let $G$ be a topological group. $G$ is called\textbf{ extremely
amenable} if any t.d.s $(G,X)$ has a $G$-fixed point, i.e. there
exists $x_{0}\in X$, such that for every $g\in G$, $gx_{0}=x_{0}$.
\end{defn}
It is easy to see that for $G$ to be extremely amenable is equivalent
to $U_{G}=\{\ast\}$. Here is a generalization of the previous definition
which appears in \cite{NVT-}:
\begin{defn}
Let $G$ be a topological group and $H\subset G$, a subgroup. The
pair $(G,H)$ is called \textbf{relatively extremely amenable }if
any t.d.s $(G,X)$ has a $H$-fixed point, i.e. there exists $x_{0}\in X$,
such that for every $h\in H$, $hx_{0}=x_{0}$. \end{defn}
\begin{prop}
\label{thm:rea=00003D00003DFix on Universal}Let $G$ be a topological
group and $H\subset G$, a subgroup. The following conditions are
equivalent:\end{prop}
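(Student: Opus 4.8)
The plan is to run a short cycle of implications, the single recurring tool being that a $G$-equivariant map $\phi\colon X\to Y$ between $G$-spaces satisfies $\phi(Fix_X(H))\subseteq Fix_Y(H)$, since for $h\in H$ and $x\in Fix_X(H)$ one has $h\phi(x)=\phi(hx)=\phi(x)$. I expect the equivalent conditions to include: (a) $(G,H)$ is relatively extremely amenable; (b) there is a universal $G$-space with an $H$-fixed point (equivalently, every universal $G$-space has one, the nontrivial direction being that universal $G$-spaces exist); (c) $Fix_{U_G}(H)\neq\emptyset$; and possibly (d) $Fix_{A_G}(H)\neq\emptyset$, where $A_G$ is the greatest ambit. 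I would prove all of these equivalent by going around a loop.

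The implication (a) $\Rightarrow$ (d) is immediate: $A_G$ is itself a t.d.s., so relative extreme amenability forces $Fix_{A_G}(H)\neq\emptyset$; and (d) $\Rightarrow$ (b) holds because $A_G$ is universal (as noted just after its definition, every minimal subspace of $A_G$ is isomorphic to $U_G$, so $A_G$ is universal). For (b) $\Rightarrow$ (c): let $X$ be a universal $G$-space with some $x\in Fix_X(H)$; since $U_G$ is minimal, it is a $G$-factor of $X$, say via $\phi\colon X\to U_G$, and then $\phi(x)\in Fix_{U_G}(H)$ by the equivariance remark.

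The one step with actual content is (c) $\Rightarrow$ (a). Let $(G,Y)$ be an arbitrary t.d.s., fix any $y\in Y$, and use Zorn's lemma to pick a minimal $G$-subspace $Z\subseteq\overline{Gy}\subseteq Y$. Because $U_G$ is universal and $Z$ is minimal, there is a $G$-factor map $\psi\colon U_G\to Z$; choosing $u\in Fix_{U_G}(H)$, which is nonempty by (c), gives $\psi(u)\in Fix_Z(H)\subseteq Fix_Y(H)$. Since $(G,Y)$ was arbitrary, $(G,H)$ is relatively extremely amenable. I do not foresee a genuine obstacle here: every step is a direct appeal to the universal property of $U_G$ (or of $A_G$) together with the equivariance observation. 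The only point requiring care is that one must first pass to a minimal subsystem before invoking universality of $U_G$, which is a universal object only for minimal systems; the greatest-ambit variant sidesteps this by noting that $\overline{Gy}$ is already a transitive system and hence a factor of $A_G$ (with the base point mapped to $y$), after which the same fixed-point-transfer argument applies.
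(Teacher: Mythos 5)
Your proof is correct and follows essentially the same route as the paper: the whole cycle rests on pushing an $H$-fixed point through a $G$-equivariant factor map (onto a minimal subsystem) together with the universality of $U_{G}$, exactly as in the paper's (3)$\Rightarrow$(1) step. Your extra detour through the greatest ambit $A_{G}$ and your explicit remark that one must first pass to a minimal subsystem of an arbitrary $(G,Y)$ are harmless refinements of the same argument (the paper leaves that last reduction implicit).
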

\begin{enumerate}
\item \label{enu:(G,H) rea}The pair $(G,H)$ is relatively extremely amenable.
\item \label{enu:U_G fixed H point}$U_{G}$ has a $H$-fixed point.
\item \label{enu:Universal T with H subset Fix(t)}There exists a universal
$G$-space $T_{G}$ and $t_{0}\in T_{G}$ which is $H$-fixed. \end{enumerate}
\begin{proof}

(\ref{enu:(G,H) rea})$\Rightarrow$(\ref{enu:U_G fixed H point}).
If $(G,H)$ is relatively extremely amenable, then by definition $(G,U_{G})$
has a $H$-fixed point.

(\ref{enu:U_G fixed H point})$\Rightarrow$(\ref{enu:Universal T with H subset Fix(t)}).
Trivial.

(\ref{enu:Universal T with H subset Fix(t)})$\Rightarrow$(\ref{enu:(G,H) rea}).
Let $X$ be a minimal $G$-space. By universality of $T_{G}$, there
exists a surjective $G$-equivariant mapping $\phi:(G,T_{G})\rightarrow(G,X)$.
Denote $x=\phi(t_{0}).$ Clearly for every $h\in H$, $hx=h\phi(t_{0})=\phi(ht_{0})=\phi(t_{0})=x$
\end{proof}
It is well-known that a non-compact locally compact group cannot be
extremely amenable. Here is a strengthening of this fact:
\begin{prop}
Let $G$ be a non-compact locally compact group and $\{e\}\subsetneq H\subset G$,
a subgroup. The pair $(G,H)$ is not relatively extremely amenable.\end{prop}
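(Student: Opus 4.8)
The plan is to exhibit a single compact $G$-space on which every nontrivial element of $G$ acts without fixed points; this is far more than needed. Since $\{e\}\subsetneq H$, fix an element $h\in H$ with $h\neq e$. For any t.d.s.\ $(G,X)$ one has $Fix_{X}(H)\subseteq Fix_{X}(h)=\{x\in X:hx=x\}$, so it suffices to produce a t.d.s.\ $(G,X)$ in which $h$ has no fixed point.

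For this I would invoke Veech's theorem: a locally compact group acts freely on its greatest ambit (see, e.g., \cite{dV93}). The t.d.s.\ $(G,A_{G})$, which exists by the discussion in Subsection \ref{sub:Universal-spaces.}, then satisfies $Fix_{A_{G}}(g)=\emptyset$ for every $g\in G$ with $g\neq e$; in particular $Fix_{A_{G}}(h)=\emptyset$, hence $Fix_{A_{G}}(H)=\emptyset$, so by definition $(G,H)$ is not relatively extremely amenable. Equivalently --- since, again by Subsection \ref{sub:Universal-spaces.}, $U_{G}$ may be realized as a minimal $G$-subspace of $A_{G}$ --- the $G$-action on $U_{G}$ is free, so $U_{G}$ has no $H$-fixed point and one concludes by Proposition \ref{thm:rea=00003D00003DFix on Universal}.

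In this approach all of the content is absorbed into Veech's theorem, and this is precisely where local compactness enters: its proof uses a compact identity neighbourhood to build, for each $h\neq e$, enough left uniformly continuous functions on $G$ to keep the $h$-translate of an arbitrary point of the Samuel compactification $A_{G}$ away from that point. I expect this to be the main obstacle to a self-contained argument. A naive attempt --- forming the orbit closure of a compactly supported test function inside $[0,1]^{G}$, or of a relatively dense subset of $G$ inside the Fell hyperspace of closed subsets of $G$ --- does produce a $G$-flow, but ruling out $h$-fixed points in it, for an $h$ whose conjugacy class accumulates at $e$, seems to require exactly the more careful Veech-style choice of functions rather than a one-step separation. (The non-compactness of $G$ is not essential to the argument; it is stated because the case $H=G$ then recovers the classical fact recalled just above.)
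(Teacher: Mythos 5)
Your proof is correct and is essentially the paper's own argument: the paper likewise cites Veech's theorem to get that $G$ acts freely on $U_{G}$ and then applies Proposition \ref{thm:rea=00003D00003DFix on Universal}(\ref{enu:U_G fixed H point}). Your added remarks (passing through the greatest ambit, and observing that non-compactness is not actually needed) are accurate but do not change the route.
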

\begin{proof}
By Veech's Theorem (\cite{V77}) $G$ acts freely on $U_{G}$. Now
use Proposition \ref{thm:rea=00003D00003DFix on Universal}(\ref{enu:U_G fixed H point}).
\end{proof}

\subsection{\label{sub:Extremely-Amenable-Interpolants}Extremely Amenable Interpolants}
\begin{defn}
Let $G$ be a topological group and $H\subset G$, a subgroup. An
extremely amenable group $E$ is called an \textbf{extremely amenable
interpolant }for the pair \textbf{$(G,H)$} if $H\subset E\subset G$.
\end{defn}
The following lemma is trivial:
\begin{lem}
Let $G$ be a topological group and $H\subset G$, a subgroup. If
there exists an extremely amenable interpolant for the pair $(G,H)$,
then $(G,H)$ is relatively extremely amenable.
\end{lem}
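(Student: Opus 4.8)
The plan is to unwind the definitions and chain together two facts: that an extremely amenable group has a fixed point in every compact space on which it acts, and that a subgroup of a group fixing a point is itself contained in the stabilizer of that point. Concretely, suppose $E$ is an extremely amenable interpolant for $(G,H)$, so $H\subset E\subset G$. Given any t.d.s.\ $(G,X)$, I would restrict the $G$-action to $E$, obtaining a continuous action $(E,X)$. Since $E$ is extremely amenable, this action has an $E$-fixed point $x_0\in X$, i.e.\ $ex_0=x_0$ for all $e\in E$.

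The second step is simply to observe that since $H\subset E$, the point $x_0$ is also $H$-fixed: for every $h\in H$ we have $h\in E$, hence $hx_0=x_0$. As $(G,X)$ was an arbitrary t.d.s., this shows by definition that $(G,H)$ is relatively extremely amenable, which is exactly the conclusion.

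There is essentially no obstacle here — the lemma is labelled trivial in the text — but the one point worth stating carefully is that the restriction of a continuous $G$-action to the subgroup $E$ is still a continuous action (with respect to the subspace topology on $E$), so that extreme amenability of $E$ genuinely applies. This is immediate since the restriction of a continuous map $G\times X\to X$ to $E\times X$ is continuous, and $X$ remains compact. With that remark in place the argument is complete.

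\begin{proof}
Let $E$ be an extremely amenable interpolant for $(G,H)$, so that $H\subset E\subset G$. Let $(G,X)$ be any t.d.s. Restricting the action to $E$ yields a continuous action of $E$ on the compact space $X$. Since $E$ is extremely amenable, there exists $x_{0}\in X$ with $ex_{0}=x_{0}$ for every $e\in E$. In particular, as $H\subset E$, we have $hx_{0}=x_{0}$ for every $h\in H$, so $x_{0}$ is an $H$-fixed point of $(G,X)$. Since $(G,X)$ was arbitrary, $(G,H)$ is relatively extremely amenable.
\end{proof}
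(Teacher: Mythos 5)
Your proof is correct and is exactly the intended argument: the paper omits the proof as trivial, and the obvious route is precisely yours (restrict the action to the interpolant $E$, obtain an $E$-fixed point by extreme amenability, and note it is a fortiori $H$-fixed since $H\subset E$). Nothing further is needed.
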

Here is an example of a non trivial extremely amenable interpolant
$E$ for a pair $(G,H)$, in the sense that neither $E=G$, nor $E=H$:
\begin{example}
Let $Q$ be the Hilbert cube. Recall that by a result of Uspenskij
(Theorem 9.18 of \cite{K95}), $Homeo(Q),$ equipped with the compact-open
topology, is a universal Polish group, in the sense that any Polish
group embeds inside it through a homomorphism. Let $Homeo_{+}(I)$
be the group of increasing homeomorphisms of the interval $I$, equipped
with the compact-open topology. By a result of Pestov (see \cite{P98})
$Homeo_{+}(I)$ is extremely amenable. Let $\phi:Homeo_{+}(I)\hookrightarrow Homeo(Q)$
be an embedding through a homomorphism. Let $f:I\rightarrow I$ given
by $f(x)=x^{2}$. Notice $f\in Homeo_{+}(I)$. Denote $G=Homeo(Q),$
$E=\phi(Homeo_{+}(I))$ and $H=\phi(\{f^{n}\,|\, n\in\mathbb{Z}\})$.
Notice $H\subsetneq E\subsetneq G$. $E$ is clearly an extremely
amenable interpolant for $(G,H)$, but $G$ (which acts homogeneously
on $Q$) and $H$ (which is isomorphic to $\mathbb{Z}$) are not extremely
amenable.

A natural question is if any relatively extremely amenable pair has
an extremely amenable interpolant. Theorem \ref{thm:rea_not_iea}
in the next subsection answers the question in the negative.
\end{example}

\subsection{\label{sub:Order-fixing-groups}Order fixing groups}

Let $S_{\infty}$ be the permutation group of the integers $\mathbb{Z}$,
equipped with the pointwise convergence topology. Let $F$ be an infinite
countable set and fix a bijection $F\simeq\mathbb{Z}$. Let $LO(F)\subset\{0,1\}^{F\times F}$,
be the space of linear orderings on $F$, equipped with the pointwise
convergence topology. Under the above mentioned bijection $LO(F)$
becomes an $S_{\infty}$-space. By Theorem 8.1 of \cite{KPT05} $U_{S_{\infty}}=LO(F)$.
Notice that we consider $F$ as a set and not a topological space.
In this subsection we will use $F=\mathbb{Z}$ and $F=\mathbb{Q}$,
considered as infinitely countable sets with convenient enumerations
(bijections) and the corresponding dynamical systems $(S_{\infty},LO(\mathbb{Z}))$
and $(S_{\infty},LO(\mathbb{Q}))$.
\begin{lem}
\label{lem:Fix(Aut(Z))}Let $<\in LO(\mathbb{Z})$ be the usual linear
order on $\mathbb{Z}$, i.e. the order for which $n<n+1$ for every $n \in \mathbb{Z}$. Then
\begin{enumerate}
\item \label{enu:Stab_Z_<}$Stab_{\mathbb{Z}}(<)=\{T_{a}|\, a\in\mathbb{Z}\}$,
where $T_{a}:\mathbb{Z}\rightarrow\mathbb{Z}$ is given by $T_{a}(x)=x+a$.
\item \label{enu:fix_LO(Z)}$Fix_{LO(\mathbb{Z})}(Stab_{\mathbb{Z}}(<))=\{<,<^{*}\}.$
\end{enumerate}
\end{lem}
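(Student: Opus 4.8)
The plan is to make the $S_\infty$-action on $LO(\mathbb{Z})$ explicit and then run two short order-theoretic arguments. Recall that $S_\infty$ acts on $LO(\mathbb{Z})$ by the logic action: for $g\in S_\infty$ and $\prec\in LO(\mathbb{Z})$, the order $g\cdot\prec$ is given by $m\,(g\cdot\prec)\,n\iff g^{-1}(m)\prec g^{-1}(n)$. In particular $g\in Stab_{\mathbb{Z}}(<)$ if and only if $g$ is an order-automorphism of $(\mathbb{Z},<)$, that is, $m<n\iff g(m)<g(n)$ for all $m,n\in\mathbb{Z}$.

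For (\ref{enu:Stab_Z_<}): each translation $T_a$ is obviously an order-automorphism of $(\mathbb{Z},<)$, so $\{T_a\mid a\in\mathbb{Z}\}\subseteq Stab_{\mathbb{Z}}(<)$. For the converse, given $g\in Stab_{\mathbb{Z}}(<)$, put $a=g(0)$ and consider $h=T_{-a}\circ g$, which is an order-automorphism of $(\mathbb{Z},<)$ fixing $0$. Since in $(\mathbb{Z},<)$ every element has a unique immediate successor (namely $n\mapsto n+1$) and an order-automorphism preserves the immediate-successor relation, an induction upwards and downwards from $0$ gives $h=\mathrm{id}$, hence $g=T_a$. (Equivalently, one observes that both $g$ and $g^{-1}$ are strictly increasing, so $g(n+1)-g(n)\ge 1$ and $g^{-1}(m+1)-g^{-1}(m)\ge 1$ for all $n,m$, which forces $g(n+1)=g(n)+1$ for every $n$.)

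For (\ref{enu:fix_LO(Z)}): using the description of the action, $T_a\cdot\prec=\prec$ for all $a\in\mathbb{Z}$ is equivalent to $\prec$ being translation-invariant, i.e. $m\prec n\iff m+a\prec n+a$ for all $a,m,n$. Such an order is completely determined by how it compares $0$ and $1$: if $0\prec 1$, then translation-invariance gives $n\prec n+1$ for every $n$, and then transitivity of $\prec$ together with totality of $<$ and asymmetry of $\prec$ force $\prec$ to coincide with $<$; symmetrically, $1\prec 0$ forces $\prec\,=\,<^*$. Conversely $<$ and $<^*$ are both translation-invariant and hence fixed by every $T_a$, so $Fix_{LO(\mathbb{Z})}(Stab_{\mathbb{Z}}(<))=\{<,<^*\}$ as claimed.

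The whole argument is elementary; the only points that require a little care are the rigidity step in (\ref{enu:Stab_Z_<})—verifying that an order-preserving bijection of $\mathbb{Z}$ is genuinely a translation rather than merely asymptotically one—and, in (\ref{enu:fix_LO(Z)}), being precise that it is translation-invariance \emph{together with} totality, not invariance alone, that narrows $\prec$ down to exactly the two orders $<$ and $<^*$.
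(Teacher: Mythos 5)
Your proof is correct and follows essentially the same route as the paper's: for (1), the rigidity of order-automorphisms of $(\mathbb{Z},<)$ via preservation of immediate successors (the paper phrases this as surjectivity forcing $T(1)=T(0)+1$), and for (2), applying the translations $T_{a}$ to the comparison of $0$ and $1$ and invoking totality. Your explicit statement of the logic action and the closing caveat about needing totality are welcome clarifications, but the substance is identical.
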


\begin{proof}

(\ref{enu:Stab_Z_<}) Let $T\in Stab(<)$. Denote $a=T(0).$ Notice that for all $x>1$,
$T(x)>T(1)>a$ and for all $x<0$, $T(x)<a$. As $T$ is onto we must
have $T(1)=a+1$. Similarly for all $x\in\mathbb{Z}$, $T(x)=x+a$,
which implies $T=T_{a}.$

(\ref{enu:fix_LO(Z)}) Let $\prec\in Fix_{LO(\mathbb{Z})}(Stab(<))$. We claim that $\prec=<$
or $\prec=<^{*}$. Indeed $0\prec1$ or $1\prec0$. In the first case
applying $T_{a}\in Stab(<),$ we have for all $a\in\mathbb{Z}$, $a\prec a+1$.
This implies $\prec=<$. Similarly in the second case for all $a\in\mathbb{Z}$,
$a+1\prec a$ which implies$\prec=<^{*}$.
\end{proof}
Let $<\in LO(\mathbb{Q})$ be the usual order on $\mathbb{Q}$. In
the following lemma, we follow the standard convention and write $Aut(\mathbb{Q},<)$ instead of $Stab_{S_{\infty}}(<)\subset S_{\infty}$.
\begin{lem}
\label{lem:Fix(Aut(Q))}Let $<\in LO(\mathbb{Q})$ be the usual linear
order on $\mathbb{Q}$, then $$Fix_{LO(\mathbb{Q})}(Aut(\mathbb{Q},<~))=\{<,<^{*}\}.$$ \end{lem}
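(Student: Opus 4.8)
The plan is to mimic the proof of Lemma \ref{lem:Fix(Aut(Z))}(\ref{enu:fix_LO(Z)}), but using the homogeneity of $(\mathbb{Q},<)$ in place of the explicit description of the stabilizer. First I would fix $\prec\in Fix_{LO(\mathbb{Q})}(Aut(\mathbb{Q},<))$ and aim to show $\prec$ coincides with $<$ or with $<^{*}$. Pick any two points $a\neq b$ in $\mathbb{Q}$; without loss of generality $a<b$. The key observation is that for \emph{any} other pair $c<d$ in $\mathbb{Q}$, the homogeneity of the rationals as a Fra\"{i}ss\'e limit (i.e. ultrahomogeneity of $(\mathbb{Q},<)$) provides some $g\in Aut(\mathbb{Q},<)$ with $g(a)=c$ and $g(b)=d$. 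Since $\prec$ is fixed by $g$, the $\prec$-relationship between $a$ and $b$ transfers to the $\prec$-relationship between $c$ and $d$.

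So the argument splits into two cases. In the first case $a\prec b$: then for every pair $c,d$ with $c<d$ we get $c\prec d$, i.e. $<\,\subseteq\,\prec$; since both are linear orders on the same set this forces $\prec\,=\,<$. In the second case $b\prec a$: the same transfer argument gives $d\prec c$ whenever $c<d$, i.e. $<^{*}\,\subseteq\,\prec$, hence $\prec\,=\,<^{*}$. Conversely, $<$ is obviously fixed by its own automorphism group, and $<^{*}$ is fixed as well since $g$ preserves $<$ if and only if it preserves $<^{*}$ (reversing a preserved order yields a preserved order). This gives the reverse inclusion $\{<,<^{*}\}\subseteq Fix_{LO(\mathbb{Q})}(Aut(\mathbb{Q},<))$ and completes the proof.

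I do not expect any serious obstacle here; the only point requiring a little care is the precise invocation of ultrahomogeneity. One should note that a two-element subset of $\mathbb{Q}$ with the induced order is a finite substructure of $(\mathbb{Q},<)$, and an order-isomorphism between two such substructures (namely $a\mapsto c$, $b\mapsto d$, valid precisely because $a<b$ and $c<d$) extends to a global automorphism; this is exactly the ultrahomogeneity of $(\mathbb{Q},<)$, equivalently Cantor's back-and-forth theorem. Everything else — that a linear order containing another linear order on the same set must equal it, and that order-reversal commutes with the action of $Aut(\mathbb{Q},<)$ — is immediate. I would keep the write-up short, emphasizing the transfer step and citing ultrahomogeneity of the rational order explicitly.
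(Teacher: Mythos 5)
Your proposal is correct and follows essentially the same route as the paper: fix $\prec$, compare one pair, and transfer that comparison to every pair $c<d$ via an order automorphism, splitting into the two cases $\prec\,=\,<$ and $\prec\,=\,<^{*}$. The only cosmetic difference is that the paper realizes the needed automorphism explicitly as the affine map $Tx=(q_{2}-q_{1})x+q_{1}$ sending $0,1$ to $q_{1},q_{2}$, whereas you invoke ultrahomogeneity of $(\mathbb{Q},<)$ abstractly; both are instances of the same transfer step.
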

\begin{proof}
Let $\prec\in Fix_{LO(\mathbb{Q})}(Aut(\mathbb{Q},<))$. Note that $0\prec1$
or $1\prec0$. In the first case, let $q_{1},q_{2}\in\mathbb{Q}$
with $q_{1}<q_{2}$ and define $T:\mathbb{Q}\rightarrow\mathbb{Q}$
with $Tx=(q_{2}-q_{1})x+q_{1}$. Note that $T\in Aut(\mathbb{Q},<)$. Hence, $q_{1}=T(0)\prec T(1)=q_{2}$. As the argument works for any $q'_{1}<q'_{2}$
we have $\prec=<$. The second case is similar and implies $\prec=<^{*}.$
\end{proof}

\subsection{Maximally Relatively Extremely Amenable Pairs}

\label{sub:Maximally-Rea}
\begin{prop}
\label{thm:mrea}Let $G$ be a topological group, then there exists
a subgroup $H\subset G$, such that $(G,H)$ is relatively extremely
amenable and there exists no subgroup $H\subset H'\subset G$, such
that $(G,H')$ is relatively extremely amenable. \end{prop}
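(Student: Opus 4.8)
The plan is to obtain the maximal relatively extremely amenable subgroup via a Zorn's lemma argument, so the crux is showing that the union of a chain of such subgroups is again relatively extremely amenable. First I would note that the trivial subgroup $\{e\}\subset G$ is always relatively extremely amenable, so the family
\[
\mathcal{R}=\{H\subset G\,:\,H\text{ is a subgroup and }(G,H)\text{ is relatively extremely amenable}\}
\]
is nonempty when ordered by inclusion. The goal is to verify the hypothesis of Zorn's lemma: every chain in $\mathcal{R}$ has an upper bound in $\mathcal{R}$.

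So let $\{H_i\}_{i\in I}$ be a chain in $\mathcal{R}$ and set $H=\bigcup_{i\in I}H_i$; since the $H_i$ are totally ordered by inclusion, $H$ is a subgroup of $G$. I would prove $(G,H)\in\mathcal{R}$ using the characterization of Proposition \ref{thm:rea=00003D00003DFix on Universal}, namely that $(G,H)$ is relatively extremely amenable iff $Fix_{U_G}(H)\neq\emptyset$. For each $i$, relative extreme amenability of $(G,H_i)$ gives that $Fix_{U_G}(H_i)$ is nonempty; it is also closed (as noted in the Preliminaries). Moreover, since $H_i\subset H_j$ implies $Fix_{U_G}(H_j)\subset Fix_{U_G}(H_i)$, the family $\{Fix_{U_G}(H_i)\}_{i\in I}$ is a family of nonempty closed subsets of the compact space $U_G$ with the finite intersection property (a finite subfamily, being a chain, has its intersection equal to the smallest member, which is nonempty). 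By compactness of $U_G$, $\bigcap_{i\in I}Fix_{U_G}(H_i)\neq\emptyset$. Finally I would observe that $\bigcap_{i\in I}Fix_{U_G}(H_i)=Fix_{U_G}(H)$: any point fixed by every $H_i$ is fixed by their union, and conversely. Hence $Fix_{U_G}(H)\neq\emptyset$, so $H\in\mathcal{R}$ and $H$ is an upper bound for the chain.

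By Zorn's lemma $\mathcal{R}$ has a maximal element $H$, which is precisely a subgroup with $(G,H)$ relatively extremely amenable and admitting no strictly larger $H'\subset G$ with $(G,H')$ relatively extremely amenable. I do not expect any serious obstacle here; the only point requiring a little care is the compactness argument, i.e. making sure the finite-intersection property really does hold for the chain of fixed-point sets, which is immediate because a finite intersection of members of a chain is just the smallest one. One could alternatively phrase the whole argument directly in terms of the greatest ambit $A_G$ or any fixed universal $G$-space, but working with $U_G$ and Proposition \ref{thm:rea=00003D00003DFix on Universal} keeps it cleanest.
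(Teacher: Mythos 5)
Your proof is correct and follows essentially the same route as the paper's: a Zorn's lemma argument where the union of a chain is handled via the characterization $Fix_{U_G}(H)\neq\emptyset$, monotonicity of the fixed-point sets, and a compactness (finite intersection property) argument in $U_G$. Your write-up is slightly more explicit about the nonemptiness of the family and the closedness of the sets $Fix_{U_G}(H_i)$, but the substance is identical.
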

\begin{proof}
By Zorn's lemma it is enough to show that any chain w.r.t. inclusion
$\{G_{\alpha}\}_{\alpha\in A}$ such that $(G,G_{\alpha})$ is relatively
extremely amenable, has a maximal element. Note that if $G_{\alpha}\subset G_{\alpha'}$,
then $Fix_{U_{G}}(G_{\alpha'})\subset Fix_{U_{G}}(G_{\alpha})$. In
particular for any finite collection $\alpha_{1},\alpha_{2},\ldots,\alpha_{n}\in A$,
we have $\bigcap_{i=1}^{n}Fix_{U_{G}}(G_{\alpha_{i}})\neq\emptyset$,
which implies by a standard compactness argument $\bigcap_{\alpha\in A}Fix_{U_{G}}(G_{\alpha})\neq\emptyset$.
This in turn implies that $Fix_{U_{G}}(\bigcup_{\alpha\in A}G_{\alpha})\neq\emptyset$,
which finally implies $(G,\bigcup_{\alpha\in A}G_{\alpha})$ is relatively
extremely amenable by Proposition \ref{thm:rea=00003D00003DFix on Universal}(\ref{enu:U_G fixed H point}).\end{proof}
\begin{defn}
A pair $(G,H)$ as in Proposition \ref{thm:mrea} is called \textbf{maximally
relatively extremely amenable}.
\end{defn}
Similarly to the previous theorem and definition we have:
\begin{prop}
\label{thm:mea}Let $G$ be a topological group, then there exists
a subgroup $H\subset G$, such that $H$ is extremely amenable and
there exists no subgroup $H\subset H'\subset G$, such that $H'$
is extremely amenable. \end{prop}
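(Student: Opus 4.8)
The plan is to imitate the proof of Proposition~\ref{thm:mrea}, replacing the phrase ``relatively extremely amenable pair'' by ``extremely amenable subgroup''. I would apply Zorn's lemma to the family $\mathcal{E}$ of all extremely amenable subgroups of $G$ (each carrying its subspace topology), partially ordered by inclusion. This family is nonempty, since the trivial subgroup $\{e\}$ is extremely amenable, and any maximal element of $\mathcal{E}$ is precisely a subgroup $H$ as in the statement: it is extremely amenable, and by maximality no subgroup strictly containing it is. So the only thing that needs proof is that every chain in $\mathcal{E}$ has an upper bound in $\mathcal{E}$, that is, that the union of a chain of extremely amenable subgroups of $G$ is again extremely amenable.

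For this, let $\{H_{\alpha}\}_{\alpha\in A}$ be a chain in $\mathcal{E}$, set $H=\bigcup_{\alpha\in A}H_{\alpha}$ (a subgroup of $G$, with the subspace topology), and let $(H,X)$ be an arbitrary t.d.s. Restricting the action to each $H_{\alpha}$ gives a continuous action $(H_{\alpha},X)$, so by extreme amenability of $H_{\alpha}$ the set $Fix_{X}(H_{\alpha})$ is a nonempty closed subset of $X$. Because $\{H_{\alpha}\}$ is a chain, any finite subfamily $H_{\alpha_{1}},\dots,H_{\alpha_{n}}$ has a largest member $H_{\alpha_{j}}$, and then $\bigcap_{i=1}^{n}Fix_{X}(H_{\alpha_{i}})=Fix_{X}(H_{\alpha_{j}})\neq\emptyset$; thus $\{Fix_{X}(H_{\alpha})\}_{\alpha\in A}$ has the finite intersection property, and compactness of $X$ yields $\bigcap_{\alpha\in A}Fix_{X}(H_{\alpha})\neq\emptyset$. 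Since $\bigcap_{\alpha\in A}Fix_{X}(H_{\alpha})=Fix_{X}\big(\bigcup_{\alpha\in A}H_{\alpha}\big)=Fix_{X}(H)$, the system $(H,X)$ has an $H$-fixed point; as $(H,X)$ was arbitrary, $H$ is extremely amenable, which is the upper bound we needed. (Alternatively one could run the same compactness argument inside the universal minimal space of $H$, mirroring Proposition~\ref{thm:mrea}, but working directly with an arbitrary compact $H$-space is the shortest route.)

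I do not expect any genuine obstacle: this is essentially a verbatim transcription of the compactness-plus-Zorn argument of Proposition~\ref{thm:mrea}, and is in fact slightly simpler, since no appeal to $U_{G}$ is required. The only points needing (entirely routine) checking are that the restriction of a continuous action remains continuous, that each $Fix_{X}(H_{\alpha})$ is closed (being an intersection of equalizers of pairs of continuous maps into a Hausdorff space), and that $Fix_{X}$ carries the union $\bigcup_{\alpha}H_{\alpha}$ to the intersection $\bigcap_{\alpha}Fix_{X}(H_{\alpha})$.
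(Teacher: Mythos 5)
Your argument is correct and is essentially the paper's own proof: the paper likewise applies Zorn's lemma and shows the union $\bigcup_{\alpha}G_{\alpha}$ of a chain of extremely amenable subgroups is extremely amenable by observing that each $Fix_{X}(G_{\alpha})$ is nonempty and that these sets are nested, then invoking the same compactness (finite intersection property) argument as in Proposition \ref{thm:mrea}. You have merely written out explicitly the steps the paper delegates to ``continue as in the proof of Theorem \ref{thm:mrea}''.
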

\begin{proof}
By Zorn's lemma it is enough to show that any chain w.r.t. inclusion
$\{G_{\alpha}\}_{\alpha\in A}$ such that $G_{\alpha}\subset G$ and
$G_{\alpha}$ is extremely amenable, has a maximal element. Let $(\bigcup_{\alpha\in A}G_{\alpha},X)$
be a dynamical system. By assumption for any $\alpha\in A$, $Fix_{X}(G_{\alpha})\neq\emptyset$.
In addition if $G_{\alpha}\subset G_{\alpha'}$, then $Fix_{X}(G_{\alpha'})\subset Fix_{X}(G_{\alpha})$.
We now continue as in the proof of Theorem \ref{thm:mrea} to conclude
$\bigcup_{\alpha\in A}G_{\alpha}$ is extremely amenable.\end{proof}
\begin{defn}
A subgroup $H\subset G$ as in Proposition \ref{thm:mea} is called
\textbf{maximally extremely amenable in $G$}.\end{defn}
\begin{rem}
It was pointed out in \cite{P02} that if $H$ is second countable
(Hausdorff) group then there always exists an extremely amenable group
$G$ such that $H\subset G$. Indeed by \cite{U90} $H\subset Iso(\mathbb{U})$
the group of isometries of Urysohn's universal complete separable
metric space $\mathbb{U}$, equipped with the compact-open topology,
and by \cite{P02}, $Iso(\mathbb{U})$ is extremely amenable.\end{rem}
\begin{thm}
\label{thm:S_infinity,Stab(<) is mrea}Let $G=S_{\infty}$ be the
permutation group of the integers, equipped with the pointwise convergence
topology. Let $<$ be the usual order on $\mathbb{Z}$ and $H=Stab_{\mathbb{Z}}(<)\subset G$.
The pair $(G,H)$ is maximally relatively extremely amenable.\end{thm}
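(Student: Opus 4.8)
The plan is to combine Lemma \ref{lem:Fix(Aut(Z))} with the characterization in Proposition \ref{thm:rea=00003D00003DFix on Universal}, using the fact, recalled in subsection \ref{sub:Order-fixing-groups}, that $U_{S_{\infty}}=LO(\mathbb{Z})$. First I would establish that $(G,H)$ is relatively extremely amenable: by Proposition \ref{thm:rea=00003D00003DFix on Universal}(\ref{enu:U_G fixed H point}) it suffices to exhibit an $H$-fixed point in $U_{G}=LO(\mathbb{Z})$, and the usual order $<$ itself is such a point since $H=Stab_{\mathbb{Z}}(<)$ fixes $<$ by definition. So the content of the theorem is the maximality assertion.

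For maximality, suppose $H\subset H'\subset G$ with $(G,H')$ relatively extremely amenable. Again by Proposition \ref{thm:rea=00003D00003DFix on Universal}(\ref{enu:U_G fixed H point}), $Fix_{LO(\mathbb{Z})}(H')\neq\emptyset$; pick $\prec$ in this set. Since $H\subset H'$, we have $Fix_{LO(\mathbb{Z})}(H')\subset Fix_{LO(\mathbb{Z})}(H)$, and by Lemma \ref{lem:Fix(Aut(Z))}(\ref{enu:fix_LO(Z)}) the latter equals $\{<,<^{*}\}$. Thus $\prec\;=\;<$ or $\prec\;=\;<^{*}$, and in either case every element of $H'$ stabilizes this order, i.e. $H'\subset Stab_{\mathbb{Z}}(\prec)$. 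Now $Stab_{\mathbb{Z}}(<^{*})=Stab_{\mathbb{Z}}(<)=H$ (reversing a linear order does not change its automorphism group — alternatively this follows from Lemma \ref{lem:Fix(Aut(Z))}(\ref{enu:Stab_Z_<}) applied to $<^{*}$, which is again a translation-invariant order on $\mathbb{Z}$), so in both cases $H'\subset H$, hence $H'=H$. This shows no strictly larger subgroup gives a relatively extremely amenable pair, which is exactly maximality.

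The only step requiring a little care is the identification $Stab_{\mathbb{Z}}(<^{*})=H$; I would handle it by noting that a bijection of $\mathbb{Z}$ preserves $<$ if and only if it preserves $<^{*}$, so the two stabilizers coincide, and then invoke part (\ref{enu:Stab_Z_<}) of Lemma \ref{lem:Fix(Aut(Z))}. Everything else is a direct application of the universal-space characterization together with the explicit computation of the fixed-point set already carried out in Lemma \ref{lem:Fix(Aut(Z))}, so I do not anticipate a genuine obstacle here; the work was front-loaded into that lemma.
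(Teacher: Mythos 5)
Your proposal is correct and follows essentially the same route as the paper's own proof: both use $U_{S_{\infty}}=LO(\mathbb{Z})$, Proposition \ref{thm:rea=00003D00003DFix on Universal}(\ref{enu:U_G fixed H point}) to convert relative extreme amenability into the existence of a fixed order, and Lemma \ref{lem:Fix(Aut(Z))} to pin that order down to $\{<,<^{*}\}$ and conclude $H'\subset Stab(<)=Stab(<^{*})=H$. The only difference is that you spell out the justification of $Stab_{\mathbb{Z}}(<^{*})=Stab_{\mathbb{Z}}(<)$, which the paper states without comment.
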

\begin{proof}
By Theorem 8.1 of \cite{KPT05} $U_{G}=LO(\mathbb{Z})$, the space
of linear orderings on $\mathbb{\mathbb{Z}}$. By Proposition \ref{thm:rea=00003D00003DFix on Universal}(\ref{enu:U_G fixed H point})
$(G,H)$ is relatively extremely amenable. Assume that there exists
a subgroup $E$, with $H\subset E\subset G$ such that $(G,E)$ is
a relatively extremely amenable. Evoking again Proposition \ref{thm:rea=00003D00003DFix on Universal}(\ref{enu:U_G fixed H point}),
there exists $\prec\in U_{G}$, so that $E\subset Stab(\prec)$. As
$H\subset E\subset Stab(\prec)$, conclude by Lemma \ref{lem:Fix(Aut(Z))}(\ref{enu:fix_LO(Z)})
that $\prec\in\{<,<^{*}\}$. As $H=Stab(<)=Stab(<^{*})$, we conclude
in both cases $E=H$. \end{proof}
\begin{lem}
\label{lem:mrea+nea->niea}If $(G,H)$ is maximally relatively extremely
amenable and neither $G$ nor $H$ are extremely amenable, then $(G,H)$
does not admit an extremely amenable interpolant. \end{lem}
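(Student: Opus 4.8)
The plan is to argue by contradiction, exploiting the fact that an extremely amenable interpolant $E$ would itself be an intermediate group strictly between $H$ and $G$ at which the pair is relatively extremely amenable, contradicting maximality --- unless $E$ coincides with one of the endpoints, which the hypotheses forbid. Concretely, suppose $(G,H)$ is maximally relatively extremely amenable, neither $G$ nor $H$ is extremely amenable, yet there exists an extremely amenable interpolant $E$ for $(G,H)$, so that by definition $H\subset E\subset G$ and $E$ is extremely amenable.

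First I would observe that, $E$ being extremely amenable, it is in particular a group for which every $G$-space has an $E$-fixed point: indeed if $(G,X)$ is any t.d.s., then restricting the action to $E$ gives a t.d.s. $(E,X)$, which has an $E$-fixed point since $E$ is extremely amenable. Hence $(G,E)$ is relatively extremely amenable. (Alternatively one can invoke the trivial Lemma preceding the Hilbert cube example, applied with the interpolant $E$ playing the role of the ``$H$'' there, to get relative extreme amenability of $(G,E)$ directly.) Now $H\subset E\subset G$ and $(G,E)$ is relatively extremely amenable, so by the maximality clause in the definition of ``maximally relatively extremely amenable'' we must have $E=H$.

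The final step is to derive the contradiction from $E=H$: since $E$ is extremely amenable, this would make $H$ extremely amenable, contrary to the standing hypothesis that neither $G$ nor $H$ is extremely amenable. This completes the proof. I do not expect any genuine obstacle here; the only point requiring a modicum of care is to state the maximality hypothesis in the exact form given in Proposition \ref{thm:mrea} --- namely that there is \emph{no} subgroup $H\subset H'\subset G$ with $(G,H')$ relatively extremely amenable --- and to notice that this phrasing permits $H'=H$, so that the conclusion one extracts is precisely $E=H$ rather than a strict containment being violated. Everything else is a one-line unwinding of definitions.
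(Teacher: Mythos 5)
Your proof is correct and follows essentially the same route as the paper's: deduce that $(G,E)$ is relatively extremely amenable and contradict maximality. If anything, you are more careful than the printed proof, which never explicitly rules out the case $E=H$ --- the step where the hypothesis that $H$ is not extremely amenable is actually used.
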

\begin{proof}
Assume for a contradiction that there exists an extremely amenable subgroup
$E$, with $H\subset E\subset G$. Notice that $(G,E)$ is relatively extremely
amenable which constitutes a contradiction with the fact that $(G,H)$
is maximally relatively extremely amenable. \end{proof}
\begin{thm}
\label{thm:rea_not_iea}There exists a relatively extremely amenable
pair $(G,H)$ which which does not admit an extremely amenable interpolant.\end{thm}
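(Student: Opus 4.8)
The plan is to exhibit a concrete pair $(G,H)$ that is relatively extremely amenable by virtue of Proposition \ref{thm:rea=00003D00003DFix on Universal}, but whose ``ambient'' group is maximally relatively extremely amenable so that Lemma \ref{lem:mrea+nea->niea} applies. The natural candidate comes from the order-fixing groups of Subsection \ref{sub:Order-fixing-groups}: take $G = S_{\infty}$ (permutations of a countable set) and $H = Aut(\mathbb{Q},<)$, viewing $\mathbb{Q}$ as our distinguished copy of the countable set. By Theorem 8.1 of \cite{KPT05}, $U_{S_{\infty}} = LO(\mathbb{Q})$, and since the usual order $<$ is a point of $LO(\mathbb{Q})$ fixed by $H = Aut(\mathbb{Q},<)$, Proposition \ref{thm:rea=00003D00003DFix on Universal}(\ref{enu:U_G fixed H point}) gives immediately that $(G,H)$ is relatively extremely amenable.

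Next I would argue that $(S_{\infty}, Aut(\mathbb{Q},<))$ is in fact \emph{maximally} relatively extremely amenable, by the same argument used for Theorem \ref{thm:S_infinity,Stab(<) is mrea}: if $H \subset E \subset G$ with $(G,E)$ relatively extremely amenable, then by Proposition \ref{thm:rea=00003D00003DFix on Universal}(\ref{enu:U_G fixed H point}) there is $\prec \in U_{G} = LO(\mathbb{Q})$ with $E \subset Stab(\prec)$, hence $H = Aut(\mathbb{Q},<) \subset Stab(\prec)$, and Lemma \ref{lem:Fix(Aut(Q))} forces $\prec \in \{<, <^{*}\}$. Since $Stab(<) = Stab(<^{*}) = Aut(\mathbb{Q},<) = H$, this yields $E = H$, establishing maximality. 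Then I would invoke Lemma \ref{lem:mrea+nea->niea}: it remains only to check that neither $G$ nor $H$ is extremely amenable. For $G = S_{\infty}$ this is classical (its universal minimal flow $LO(\mathbb{Q})$ is nontrivial, or one notes it is not even amenable in the relevant sense); for $H = Aut(\mathbb{Q},<)$ one could also note $LO(\mathbb{Q})$ is nontrivial, but the cleanest route is to recall Pestov's theorem that $Aut(\mathbb{Q},<)$ \emph{is} extremely amenable — which is a problem, so I must instead choose $H$ to be a proper extremely-amenable-excluding subgroup.

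Reconsidering, the right choice is to mimic Theorem \ref{thm:S_infinity,Stab(<) is mrea} directly: set $G = S_{\infty}$ acting on $\mathbb{Z}$ and $H = Stab_{\mathbb{Z}}(<) = \{T_a \mid a \in \mathbb{Z}\} \cong \mathbb{Z}$. By Theorem \ref{thm:S_infinity,Stab(<) is mrea}, $(G,H)$ is maximally relatively extremely amenable. The group $G = S_{\infty}$ is not extremely amenable since $U_{S_{\infty}} = LO(\mathbb{Z}) \neq \{\ast\}$, and $H \cong \mathbb{Z}$ is not extremely amenable since a nontrivial discrete group is never extremely amenable (e.g., it acts freely on its Bohr compactification, or more elementarily a nontrivial finite-orbit action has no fixed point; in any case $U_{\mathbb{Z}}$ is the Bohr compactification, which is nontrivial). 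Therefore by Lemma \ref{lem:mrea+nea->niea}, the pair $(G,H) = (S_{\infty}, Stab_{\mathbb{Z}}(<))$ is relatively extremely amenable but admits no extremely amenable interpolant, which proves the theorem.

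\begin{proof}
Let $G = S_{\infty}$ be the permutation group of $\mathbb{Z}$ with the pointwise convergence topology, let $<$ be the usual order on $\mathbb{Z}$, and let $H = Stab_{\mathbb{Z}}(<)$. By Theorem \ref{thm:S_infinity,Stab(<) is mrea}, the pair $(G,H)$ is maximally relatively extremely amenable; in particular it is relatively extremely amenable. By Lemma \ref{lem:Fix(Aut(Z))}(\ref{enu:Stab_Z_<}), $H = \{T_a \mid a \in \mathbb{Z}\}$ is (topologically) isomorphic to the discrete group $\mathbb{Z}$, which is not extremely amenable: its universal minimal flow is its Bohr compactification, which is a nontrivial compact group on which $\mathbb{Z}$ acts without fixed points. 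Also $G = S_{\infty}$ is not extremely amenable, since by Theorem 8.1 of \cite{KPT05} its universal minimal space is $LO(\mathbb{Z})$, which is not a singleton. Thus neither $G$ nor $H$ is extremely amenable, and Lemma \ref{lem:mrea+nea->niea} shows that $(G,H)$ does not admit an extremely amenable interpolant.
\end{proof}
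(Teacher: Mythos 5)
Your final argument is correct and is essentially the paper's own proof: take $G=S_{\infty}$, $H=Stab_{\mathbb{Z}}(<)\cong\mathbb{Z}$, invoke Theorem \ref{thm:S_infinity,Stab(<) is mrea} for maximality, observe neither group is extremely amenable, and apply Lemma \ref{lem:mrea+nea->niea}. One small slip in a side remark: the universal minimal flow of discrete $\mathbb{Z}$ is \emph{not} its Bohr compactification (it is a minimal left ideal of $\beta\mathbb{Z}$, far larger), but your conclusion stands since the free translation action on the Bohr compactification (or an irrational rotation of the circle) already witnesses that $\mathbb{Z}$ is not extremely amenable.
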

\begin{proof}
Let $G=S_{\infty}$ be the permutation group of the integers, equipped
with the pointwise convergence topology. Let $<$ be the usual order
on $\mathbb{Z}$ and $H=Stab(<)\subset G$. By Theorem \ref{thm:S_infinity,Stab(<) is mrea}
$(G,H)$ is maximally relatively extremely amenable. Clearly $G$
is not extremely amenable as $U_{G}\neq\{\ast\}$. By Lemma \ref{lem:Fix(Aut(Z))}(\ref{enu:Stab_Z_<})
$H=\{T_{a}|\, a\in\mathbb{Z}\}\cong\mathbb{Z}$, where the second
equivalence is as topological groups. This implies $H$ is not extremely
amenable. Now invoke Lemma \ref{lem:mrea+nea->niea}. \end{proof}
\begin{thm}\label{thm:AutQ}
$Aut(\mathbb{Q},<)$ is maximally extremely amenable in $S_{\infty}$.\end{thm}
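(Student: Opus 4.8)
The plan is to mimic the proof of Theorem \ref{thm:S_infinity,Stab(<) is mrea}, using the machinery already set up for $\mathbb{Q}$. First I would recall that $Aut(\mathbb{Q},<) = Stab_{S_\infty}(<)$ is extremely amenable: this is the Pestov theorem quoted in the introduction, and it also follows from Theorem 8.1 of \cite{KPT05} together with Lemma \ref{lem:Fix(Aut(Q))}, since $U_{S_\infty} = LO(\mathbb{Q})$ (using the enumeration of $\mathbb{Q}$) and $<$ is a fixed point of $Aut(\mathbb{Q},<)$ in $LO(\mathbb{Q})$, so $Aut(\mathbb{Q},<)$ is relatively extremely amenable as a subgroup of itself, i.e.\ extremely amenable. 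So the content of the theorem is the maximality.

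Next I would argue the maximality directly. Suppose $E$ is a subgroup with $Aut(\mathbb{Q},<) \subset E \subset S_\infty$ and $E$ extremely amenable. Then in particular the $S_\infty$-space $U_{S_\infty} = LO(\mathbb{Q})$, restricted to an $E$-space, must have an $E$-fixed point $\prec \in LO(\mathbb{Q})$; equivalently $E \subset Stab_{S_\infty}(\prec)$. Since $Aut(\mathbb{Q},<) \subset E \subset Stab(\prec)$, we get $\prec \in Fix_{LO(\mathbb{Q})}(Aut(\mathbb{Q},<))$, and by Lemma \ref{lem:Fix(Aut(Q))} this forces $\prec \in \{<, <^{*}\}$. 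Since an automorphism of $(\mathbb{Q},<)$ is exactly an automorphism of $(\mathbb{Q},<^{*})$ (reversing a preserved order preserves it), we have $Stab(<) = Stab(<^{*}) = Aut(\mathbb{Q},<)$, hence $E \subset Aut(\mathbb{Q},<)$ and therefore $E = Aut(\mathbb{Q},<)$. This shows no strictly larger extremely amenable subgroup exists, which is precisely the assertion in Proposition \ref{thm:mea} defining ``maximally extremely amenable in $S_\infty$.''

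One technical point I would want to be careful about: extreme amenability of $E$ only gives a fixed point for continuous $E$-actions on compact spaces, and I am using it on $LO(\mathbb{Q})$ viewed as an $E$-space via the inclusion $E \hookrightarrow S_\infty$, which is a homeomorphic embedding (the subspace topology of pointwise convergence agrees with pointwise convergence on $E$), so the action is indeed continuous and the argument is legitimate. The only mild subtlety is that extreme amenability of $E$ is being applied to a space which is merely universal (indeed minimal) for $S_\infty$ rather than for $E$ — but a fixed point for the $E$-action on any $E$-space is all I need, so the universality is not even used; I simply need $LO(\mathbb{Q})$ to be a compact $E$-space, which it is. I do not anticipate a serious obstacle here: the whole argument is a near-verbatim transcription of Theorem \ref{thm:S_infinity,Stab(<) is mrea} with Lemma \ref{lem:Fix(Aut(Z))}(\ref{enu:fix_LO(Z)}) replaced by Lemma \ref{lem:Fix(Aut(Q))}, and the reason Cameron's article \cite{Cam76} is credited is presumably for the underlying observation that $Aut(\mathbb{Q},<)$ is a maximal closed subgroup of $S_\infty$ (equivalently, that $\{<,<^{*}\}$ exhausts the orders it fixes), which is exactly what Lemma \ref{lem:Fix(Aut(Q))} encodes.
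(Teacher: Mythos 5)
Your proposal is correct and follows essentially the same route as the paper: extreme amenability of $Aut(\mathbb{Q},<)$ is taken from Pestov's theorem \cite{P98}, and maximality is obtained by running the argument of Theorem \ref{thm:S_infinity,Stab(<) is mrea} with Lemma \ref{lem:Fix(Aut(Z))}(\ref{enu:fix_LO(Z)}) replaced by Lemma \ref{lem:Fix(Aut(Q))}, exactly as the paper indicates. One caveat: your parenthetical claim that extreme amenability of $Aut(\mathbb{Q},<)$ ``also follows from'' $U_{S_\infty}=LO(\mathbb{Q})$ having the fixed point $<$ is not valid --- that observation only shows that the pair $(S_{\infty},Aut(\mathbb{Q},<))$ is relatively extremely amenable, i.e.\ that every \emph{$S_\infty$}-space has an $Aut(\mathbb{Q},<)$-fixed point, which says nothing about arbitrary $Aut(\mathbb{Q},<)$-spaces; since you rely on Pestov's theorem for this step anyway, the proof is unaffected.
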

\begin{proof}
By \cite{P98} $Aut(\mathbb{Q},<)$ is extremely amenable. Now we
can proceed as in the proof of Theorem \ref{thm:rea_not_iea} using
Lemma \ref{lem:Fix(Aut(Q))}.
\end{proof}

\begin{rem} Even though the previous result never appeared in print, Todor Tsankov pointed out that it can be derived from an earlier result by Cameron. Indeed, the article \cite{Cam76} 
allows a complete description of the closed subgroups $G$ of $S_{\infty}$ containing $Aut(\mathbb{Q})$ (essentially, there are only five of them, see \cite{BP-} for an explicit description) 
and it can be verified that among those, only $Aut(\mathbb{Q})$ is extremely amenable. 
\end{rem}

\subsection{Applications in Fra\"{i}ss\'e Theory}

\label{sub:Fraisse-Theory}

The following two sections deal with applications Fra\"{i}ss\'e Theory.
Two general references for this theory are \cite{F00} and \cite{H93}.
We follow the exposition and notation of \cite{KPT05}. 

Let $\{<\}\subset L$,$L_{0}=L\setminus\{<\}$
be signatures, $K_{0}$ a Fra\"{i}ss\'e class in $L_{0}$, $K$ an order Fra\"{i}ss\'e expansion of $K_{0}$ in $L$, $F=Flim(K)$ the Fra\"{i}ss\'e limit of $K$.
By Theorem 5.2$(ii)\Rightarrow(i)$ of \cite{KPT05}, if we denote $F_{0}=Flim(K_{0})$ then $F_{0}=F|L_{0}$.
Let $G_{0}=Aut(F_{0})$ and $G=Aut(F)$. Denote $<^{F}=<_{0}$, i.e.
$<_{0}$ is the linear order corresponding to the symbol $<$ in $F$,
and let $X_{K}=\overline{G_{0}<_{0}}$ ($X_{K}$ is called set of
\textit{$K$-admissible} linear orderings of $F$ in \cite{KPT05}). In \cite{KPT05}, two combinatorial properties for $K$ have considerable importance in order to compute universal minimal spaces. Those are called \emph{ordering property} and \emph{Ramsey property}: 

\begin{defn}

Let $\{<\}\subset L$ be a signature, $L_{0}=L\setminus\{<\}$, $K_{0}$ a Fra\"{i}ss\'e class in $L_{0}$, $K$ an order Fra\"{i}ss\'e expansion of $K_{0}$ in $L$, $F=Flim(K)$ the Fra\"{i}ss\'e limit of $K$. We say
that $K$ satisfies the \textbf{ordering property} (relative to $K_{0}$) if for every $A_{0}\in K_{0}$,
there is $B_{0}\in K_{0}$, such that for every linear ordering $\prec$
on $A_{0}$ and linear ordering $\prec'$ on $B_{0}$, if $A=\langle A_{0},\prec\rangle\in K$
and $B=\langle B_{0},\prec'\rangle\in K$ , then there is an embedding
$A\hookrightarrow B$.
\end{defn}

\begin{defn}
Let $\{<\}\subset L$ be a signature and $K$ be an order Fra\"{i}ss\'e class in $L$. We say that $K$ satisfies the \textbf{Ramsey property} if, for every positive $k \in \N$, every $A \in K$ and every $B  \in K$, there exists $C \in K$ such that for every $k$-coloring of the substructures of $C$ which are isomorphic to $A$, there is a substructure $\tilde B$ of $C$ which is isomorphic to $B$ and such that all substructures of $\tilde B$ which are isomorphic to $A$ receive the same color. 
\end{defn}

Those two properties are relevant because they capture dynamical properties of $X_{K}$. For example, Theorem 7.4 of \cite{KPT05} states that the minimality of $X_{K}$
is equivalent to $K$ having the ordering property, and Theorem 10.8 of
\cite{KPT05} states that $X_{K}$ being universal and minimal is equivalent to $K$ having the ordering and Ramsey properties.
Those results naturally led the authors of \cite{KPT05} to ask whether $X_{K}$ being universal is equivalent to $K$ having the Ramsey property. This question is precisely the reason for which the concept of relative extremely amenability was introduced. Recall that by Theorem 4.7 of \cite{KPT05}, the Ramsey property of
$K$ is equivalent to $G$ being extremely amenable. In \cite{NVT-},
it is shown that the universality of $X_{K}$ is equivalent to $(G_{0},G)$
being relatively extremely amenable. However, it is still unknown whether $(G_{0},G)$
being relatively extremely amenable is really weaker than $G$ being extremely amenable (see Section \ref{sub:Conjecture.} for more about this aspect). 

\begin{rem}

\label{ord}
The reason for which only \emph{order} expansions (i.e. $\{<\}\subset L$,$L_{0}=L\setminus\{<\}$, and $<$ is interpreted as a linear order) were considered in \cite{KPT05} is that, at the time where the article was written, expanding the signature by such a symbol was sufficient in order to obtain Ramsey property and ordering property in all known practical cases. However, we know now that there are some cases where expanding the language with more symbols is necessary (E.g. \emph{circular tournaments} and \emph{boron tree structures}, whose Ramsey-type properties have been respectiveley analyzed by Laflamme, Nguyen Van Th\'e and Sauer in \cite{LNS10}, and by Jasi\'nski in \cite{J-}). 
The description of the corresponding universal minimal spaces is very similar to what is obtained in \cite{KPT05} and will appear in a forthcoming paper. For the sake of clarity, we will only treat here the case of order expansions, which extends to the general case without difficulty.

\end{rem}

\subsection{The weak ordering property.}

\label{sub:weak-ordering-Property}

Theorem 10.8 of \cite{KPT05} states that $K$ has the ordering and
Ramsey properties if and only if $X_{K}$ is the universal minimal
space of $G_{0}$. The purpose of this section is to show that the combinatorial assumptions made on $K$ can actually be slightly weakened. We start with a generalization of the notion of transitivity mentioned
in subsection \ref{sub:Universal-spaces.}.
\begin{defn}
Let $G$ be a topological group and $X$ a $G$-space. $Y\subset X$
is said to be \textbf{transitive w.r.t}\textbf{\textit{ $X$}} if
and only if for any $y\in Y$, $\overline{Gy}=X$. \end{defn}
\begin{prop}
\label{thm:rea+transitive-->universal minimal}Let $G_{0}$ be a topological
group and let $T_{G_{0}}$ be $G_{0}$-universal. Let $x\in T_{G_{0}}$
and let $G=Stab{}_{G_{0}}(x)\subset G_{0}$. $T_{G_{0}}$ is minimal
if and only if $Fix_{T_{G_{0}}}(G)$ is transitive w.r.t $T_{G_{0}}$.\end{prop}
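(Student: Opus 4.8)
The plan is to prove the two implications separately, using in both directions the universality of $T_{G_0}$ and the fact (noted in subsection \ref{sub:Universal-spaces.}) that every minimal $G_0$-subspace of a universal space is isomorphic to $U_{G_0}$.

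For the direction ``$T_{G_0}$ minimal $\Rightarrow Fix_{T_{G_0}}(G)$ transitive w.r.t. $T_{G_0}$'': first I would note that $Fix_{T_{G_0}}(G)$ is nonempty, since it contains $x$ itself ($G = Stab_{G_0}(x)$ fixes $x$ by definition). Then, for any $y \in Fix_{T_{G_0}}(G)$, the set $\overline{G_0 y}$ is a nonempty closed $G_0$-invariant subset of $T_{G_0}$, hence equals $T_{G_0}$ by minimality. So every point of $Fix_{T_{G_0}}(G)$ has dense orbit, which is exactly transitivity w.r.t. $T_{G_0}$.

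For the converse ``$Fix_{T_{G_0}}(G)$ transitive w.r.t. $T_{G_0}$ $\Rightarrow T_{G_0}$ minimal'': suppose $Y \subset T_{G_0}$ is a nonempty closed $G_0$-invariant set; I must show $Y = T_{G_0}$. By Zorn's lemma $Y$ contains a minimal $G_0$-subspace $M$, which by universality of $T_{G_0}$ is isomorphic to $U_{G_0}$. The idea is to locate inside $M$ a point that is $G$-fixed: since $(G_0, M) \cong (G_0, U_{G_0})$, it suffices to find a $G$-fixed point in $U_{G_0}$, but rather than argue abstractly I would instead observe that $T_{G_0}$ itself is universal and minimal-containing, so $M$ being a minimal subspace means the $G_0$-factor map from $T_{G_0}$ onto $M$ (or rather the abstract isomorphism $M \cong U_{G_0}$) transports structure; the cleaner route is: the point $x \in Fix_{T_{G_0}}(G)$ has $\overline{G_0 x} = T_{G_0}$ by the transitivity hypothesis, and any $G_0$-factor map $\phi : T_{G_0} \to M$ sends $x$ to a point $\phi(x) \in M$ which is $G$-fixed (since $h\phi(x) = \phi(hx) = \phi(x)$ for $h \in G$) and has dense orbit in $M$. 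Now apply the transitivity hypothesis again: $\phi(x) \in Fix_{T_{G_0}}(G)$ would require $\phi(x) \in T_{G_0}$, but $M \subset T_{G_0}$ so indeed $\phi(x) \in Fix_{T_{G_0}}(G)$, hence $\overline{G_0 \phi(x)} = T_{G_0}$; on the other hand $\overline{G_0 \phi(x)} \subset M \subset Y$, so $T_{G_0} \subset Y$ and we are done.

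**Main obstacle.** The delicate point is getting the $G_0$-factor map $\phi : T_{G_0} \to M$: universality of $T_{G_0}$ gives a factor map onto every \emph{minimal} $G_0$-space, and $M$ is minimal, so this is exactly what universality provides — but I should be careful that the map is genuinely a continuous $G_0$-equivariant surjection onto $M$ (not merely onto some isomorphic copy), which is the content of the definition of universality combined with the fact that $M$, being minimal, is a $G_0$-factor of $T_{G_0}$. Once that map is in hand, the rest is the short diagram-chase above. An alternative, perhaps slightly cleaner, is to avoid $M$ entirely: take any nonempty closed $G_0$-invariant $Y$, pick a minimal subspace $M \subset Y$, get $\phi: T_{G_0} \twoheadrightarrow M$, and run the argument; the only real input is that $\phi(x)$ lands in $Fix_{T_{G_0}}(G) \subset T_{G_0}$ and simultaneously in $M \subset Y$, so the hypothesis forces $T_{G_0} = \overline{G_0\phi(x)} \subset Y$.
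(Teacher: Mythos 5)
Your proof is correct and follows essentially the same route as the paper: both arguments reduce the nontrivial direction to producing a $G$-fixed point inside a minimal subspace $M\subset T_{G_{0}}$ and then invoking the transitivity hypothesis to force $M=T_{G_{0}}$. The only cosmetic difference is that you obtain that fixed point by explicitly pushing $x$ forward along the factor map $T_{G_{0}}\twoheadrightarrow M$ given by universality, whereas the paper gets it by citing Proposition \ref{thm:rea=00003D00003DFix on Universal} to conclude that $(G_{0},G)$ is relatively extremely amenable --- but that citation is proved by exactly the factor-map argument you inlined.
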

\begin{proof}
If $T_{G_{0}}$ is minimal then $T_{G_{0}}$ is transitive w.r.t itself
and trivially $Fix_{T_{G_{0}}}(G)\subset T_{G_{0}}$ is transitive
w.r.t $T_{G_{0}}$. To prove the inverse direction, let $M\subset T_{G_{0}}$
be a $G_{0}$-minimal space. By Proposition \ref{thm:rea=00003D00003DFix on Universal}(\ref{enu:Universal T with H subset Fix(t)}),
$(G_{0},G)$ is relatively extremely amenable and therefore there
exists $t_{0}\in M\cap Fix_{T_{G_{0}}}(G)$. As $Fix_{T_{G_{0}}}(G)$
is transitive w.r.t $T_{G_{0}}$, conclude $T_{G_{0}}=\overline{G_{0}t_{0}}\subset M$,
so $T_{G_{0}}=M$ is minimal.
\end{proof}

The previous proposition enables us to prove the following equivalence: 

\begin{thm}
\label{thm:Fix_transitivity-->minimality} $(G_{0},G)$ is
relatively extremely amenable and $Fix_{X_{K}}(G)$ is transitive
w.r.t $X_{K}$ if and only if $X_{K}$ is the universal minimal
space of $G_{0}$.\end{thm}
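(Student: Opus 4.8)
The plan is to deduce this theorem directly from Proposition~\ref{thm:rea+transitive-->universal minimal} by supplying the correct universal $G_0$-space and recognizing $G$ as a point stabilizer inside it. First I would recall the setup: we have $F = Flim(K)$, $G = Aut(F)$, $G_0 = Aut(F_0)$ where $F_0 = F|L_0$, the distinguished order $<_0 = {<^F}$, and $X_K = \overline{G_0 <_0} \subseteq LO(F)$, the $G_0$-orbit closure of $<_0$ in the space of linear orderings on the underlying set of $F$. The key structural observation is that $X_K$ carries a natural $G_0$-action and that $G$, viewed as a subgroup of $G_0$, is exactly the stabilizer $Stab_{G_0}(<_0)$: indeed an automorphism of $F_0$ preserves the extra relation symbols of $L$ and the order $<_0$ precisely when it is an automorphism of the full structure $F = \langle F_0, <_0, \ldots\rangle$. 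So $G = Stab_{G_0}(<_0)$ with $<_0 \in X_K$.

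Next I would invoke the input from \cite{NVT-} (cited in subsection~\ref{sub:Fraisse-Theory}), or rather the part of it already available here: the crucial fact that $X_K$ is a \emph{universal} $G_0$-space whenever $(G_0,G)$ is relatively extremely amenable. Actually, to keep the argument self-contained within the excerpt, I would instead proceed as follows. For the direction ($\Leftarrow$): if $X_K$ is the universal minimal space of $G_0$, then in particular $X_K = U_{G_0}$ is minimal and, by Proposition~\ref{thm:rea=00003D00003DFix on Universal}, since $U_{G_0}$ certainly has a point fixed by $G = Stab_{G_0}(<_0)$ (namely $<_0$ itself), the pair $(G_0, G)$ is relatively extremely amenable; moreover minimality of $X_K$ combined with Proposition~\ref{thm:rea+transitive-->universal minimal} (applied with $T_{G_0} = X_K$, which is universal by assumption, $x = <_0$, $G = Stab_{G_0}(<_0)$) gives that $Fix_{X_K}(G)$ is transitive w.r.t.\ $X_K$. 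For the direction ($\Rightarrow$): assuming $(G_0,G)$ is relatively extremely amenable, the result from \cite{NVT-} gives that $X_K$ is $G_0$-universal; then $X_K$ is a universal $G_0$-space with $x = <_0$ a point whose stabilizer is $G$, and $Fix_{X_K}(G)$ transitive w.r.t.\ $X_K$ is by Proposition~\ref{thm:rea+transitive-->universal minimal} equivalent to minimality of $X_K$; a universal minimal $G_0$-space is, by uniqueness, $U_{G_0}$.

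Concretely the write-up would be: ``By Theorem~5.2 of \cite{KPT05}, $F_0 = F|L_0$, so $G = Aut(F) = Stab_{G_0}(<_0)$, and $<_0 \in X_K$. By \cite{NVT-}, the pair $(G_0,G)$ is relatively extremely amenable if and only if $X_K$ is a universal $G_0$-space. Assume first that $X_K$ is the universal minimal space of $G_0$; it is in particular universal, so $(G_0,G)$ is relatively extremely amenable, and applying Proposition~\ref{thm:rea+transitive-->universal minimal} with $T_{G_0} = X_K$ and $x = <_0$, the minimality of $X_K$ yields that $Fix_{X_K}(G)$ is transitive w.r.t.\ $X_K$. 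Conversely, if $(G_0,G)$ is relatively extremely amenable then $X_K$ is universal, and Proposition~\ref{thm:rea+transitive-->universal minimal} shows that transitivity of $Fix_{X_K}(G)$ w.r.t.\ $X_K$ is equivalent to minimality of $X_K$; a universal and minimal $G_0$-space is isomorphic to $U_{G_0}$.''

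The main obstacle — and the reason this theorem is more than a one-line corollary — is justifying that $X_K$ is $G_0$-\emph{universal} under the relative extreme amenability hypothesis; that equivalence is the substantive content, attributed here to \cite{NVT-}, and it is the step on which the whole argument pivots. Everything else (identifying $G$ as $Stab_{G_0}(<_0)$, unwinding the definitions of transitivity-w.r.t.-$X_K$, and applying Proposition~\ref{thm:rea+transitive-->universal minimal}) is routine bookkeeping. I would also be careful to state explicitly why $Fix_{X_K}(G) \neq \emptyset$ trivially — because $<_0$ is fixed by its own stabilizer $G$ — so that the two notions of transitivity and non-emptiness are not conflated, and to note that uniqueness of the universal minimal space (recalled in subsection~\ref{sub:Universal-spaces.}) is what lets us conclude ``$X_K$ is the universal minimal space'' rather than merely ``$X_K$ is universal and minimal.''
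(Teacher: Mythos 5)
Your proposal is correct and follows essentially the same route as the paper: both arguments pivot on the equivalence from \cite{NVT-} between universality of $X_{K}$ and relative extreme amenability of $(G_{0},G)$, and then apply Proposition \ref{thm:rea+transitive-->universal minimal} (with $x=<_{0}$ and $G=Stab_{G_{0}}(<_{0})$) to convert minimality into transitivity of $Fix_{X_{K}}(G)$. Your write-up merely makes explicit two points the paper leaves implicit, namely the identification $G=Stab_{G_{0}}(<_{0})$ and the appeal to uniqueness of the universal minimal space.
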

\begin{proof}
As indicated previously, the universality of $X_{K}$ is equivalent
to the fact that $(G_{0},G)$ is relatively extremely amenable. By
Proposition \ref{thm:rea+transitive-->universal minimal}, given that
$X_{K}$ is universal, the minimality of $X_{K}$ is equivalent to
the fact that $Fix_{X_{K}}(G)$ is transitive w.r.t $X_{K}$ .%
{}
\end{proof}

We are now going to show how to reformulate Theorem \ref{thm:Fix_transitivity-->minimality} in terms of combinatorics.

\begin{defn}

Let $\{<\}\subset L$ be a signature, $L_{0}=L\setminus\{<\}$, $K_{0}$ a Fra\"{i}ss\'e class in $L_{0}$, $K$ an order Fra\"{i}ss\'e expansion of $K_{0}$ in $L$. We say
that $(K_{0}, K)$ has the \textbf{relative Ramsey property} if for every positive $k \in \N$, every $A_{0} \in K_{0}$ and every $B  \in K$, there exists $C \in K_{0}$ such that for every $k$-coloring of the substructures of $C_{0}$ isomorphic to $A_{0}$, there is an embedding $\phi : B|L_{0} \hookrightarrow C_{0}$ such that for any two substructures $\tilde A, \tilde A'$ of $B_{0}$ isomorphic to $A_{0}$, $\phi(\tilde A)$ and $\phi (\tilde A')$ receive the same color whenever $\tilde A$ and $\tilde A'$ support isomorphic structures in $B$.
\end{defn}

In what follows, the relative Ramsey property will appear naturally because of the following fact (see \cite{NVT-}): 

\begin{claim}
$(G_{0},G)$ is relatively extremely amenable iff $(K_{0},K)$ has the relative Ramsey property. 
\end{claim}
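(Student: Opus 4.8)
The plan is to connect the combinatorial statement (the relative Ramsey property of $(K_0,K)$) to the dynamical statement ($(G_0,G)$ being relatively extremely amenable) via the characterization already established in Proposition \ref{thm:rea=00003D00003DFix on Universal}, namely that $(G_0,G)$ is relatively extremely amenable iff $U_{G_0}$ has a $G$-fixed point, or equivalently iff some universal $G_0$-space has a $G$-fixed point. Since $G = Stab_{G_0}(<_0)$ and $<_0 = {<^F}$ is the distinguished order, the natural candidate for a $G$-fixed point inside a relevant $G_0$-space is the point $<_0$ itself sitting inside $X_K = \overline{G_0 <_0}$. So the crux is a Ramsey-theoretic finite-approximation argument showing: the orbit closure $\overline{G_0 <_0}$ contains no finite $G_0$-invariant obstruction to $<_0$ being fixed by $Stab_{G_0}(<_0)$ — more precisely, that the relative Ramsey property is exactly the finitary combinatorial shadow of a fixed point existing.

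Concretely, I would follow the now-classical Kechris--Pestov--Todorcevic strategy (Theorem 4.5 and 4.7 of \cite{KPT05}), adapted to the relative setting. First I would reformulate extreme-amenability-type fixed point statements in terms of finite colorings: $(G_0,G)$ is relatively extremely amenable iff for every finite partition (coloring) of a basic clopen piece of any $G_0$-flow — equivalently, by a standard reduction, of the Samuel compactification / greatest ambit of $G_0$ — one color class is "$G$-syndetically large" in the appropriate sense. Using that $G_0 = Aut(F_0)$ is a closed subgroup of $S_\infty$ with $F_0$ the Fraïssé limit, one translates such colorings into $k$-colorings of copies of finite substructures $A_0 \in K_0$ inside larger finite substructures, with the extra constraint coming from $G$ (equivalently from the order expansion $K$): two copies of $A_0$ must be forced to the same color precisely when they "support isomorphic structures in $B$", i.e. when the induced expanded structures in $K$ agree. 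This is exactly the hypothesis built into the definition of the relative Ramsey property, so the finitary reformulation on one side matches the relative Ramsey property on the other.

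The two directions then go as follows. For "relative Ramsey property $\Rightarrow$ relatively extremely amenable": given a continuous action of $G_0$ on a compact space $X$ and a point $x \in X$, one approximates by finite clopen covers, pulls back to finite colorings of copies of structures from $K_0$, applies the relative Ramsey property to find large monochromatic-modulo-$K$ sets, and takes a limit along an ultrafilter (or a compactness/Zorn argument) to produce a point fixed by $G = Stab_{G_0}(<_0)$; the relative constraint guarantees the limit point is fixed by all of $G$, not merely by $G_0$-translates that preserve the coloring. For the converse, one negates: if the relative Ramsey property fails, one builds from the bad colorings a $G_0$-flow (a subshift inside a product of finite color spaces indexed by copies of $A_0$ in $F_0$) on which $G$ has no fixed point, contradicting relative extreme amenability. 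Both directions are routine once the dictionary between the topological-group fixed-point language and the finite-structure coloring language is set up correctly.

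The main obstacle I expect is bookkeeping the relative constraint correctly throughout the translation: in the ordinary KPT correspondence one colors copies of $A$ (an expanded structure) and asks for a monochromatic copy of $B$, whereas here one colors copies of $A_0$ (an unexpanded structure) inside $C_0 \in K_0$ but only demands monochromaticity on pairs of copies that carry the same $K$-expansion inside $B$ — and one must check that this weaker demand is \emph{exactly} what the stabilizer $G = Stab_{G_0}(<_0)$ "sees", neither more nor less. Getting the quantifier order right (the embedding $\phi : B|L_0 \hookrightarrow C_0$ must be chosen \emph{after} the coloring, and the monochromaticity condition is conditional on supporting isomorphic structures in $B$) is where the argument could go wrong, and it is the only place requiring genuine care; the topological limiting arguments are standard. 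Since the paper defers the full proof to \cite{NVT-}, I would present this as a proof sketch indicating the KPT-style reduction and the role of the relative constraint, with the detailed verification referred to the forthcoming work.
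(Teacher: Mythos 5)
First, a structural point: the paper itself gives no proof of this claim --- it is stated as a ``fact, which will appear in \cite{NVT-}'' --- so there is no in-paper argument to measure yours against. On its own terms, your sketch identifies the right dictionary: by ultrahomogeneity of $F$, two finite substructures of $F_{0}$ isomorphic to $A_{0}$ lie in the same $G$-orbit exactly when the $L$-structures they support in $F$ are isomorphic, so ``constant on copies supporting isomorphic structures in $B$'' is precisely the finitary shadow of $G$-invariance. Your converse direction (from a failure of the relative Ramsey property, form the orbit closure of the bad coloring inside the product of finite color sets indexed by the copies of $A_{0}$ in $F_{0}$, and observe that a $G$-fixed point there would yield, over some copy of $B|L_{0}$, an embedding witnessing the property) is the standard argument and would go through.

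As a proof, however, the proposal has a genuine gap where you declare things ``routine.'' The forward direction is the substantive one: the relative Ramsey property gives, for each coloring and each $B$, a single embedding of $B|L_{0}$ over which the coloring factors through $K$-isomorphism types, and one must make these finitely many local invariantizations cohere into a single point of an arbitrary minimal $G_{0}$-flow fixed by \emph{all} of $G$; the compactness/ultrafilter step is exactly where the quantifier order you worry about must be verified, and you do not carry it out. A second, concrete issue you brush past: the definition colors \emph{substructures} of $C_{0}$ isomorphic to $A_{0}$, whereas the correspondence with the left uniformity on $G_{0}$ (cosets of pointwise stabilizers of finite tuples) is naturally with \emph{embeddings} of $A_{0}$; when $A_{0}$ has nontrivial automorphisms these are not the same, and the equivalence as literally stated needs either a rigidity hypothesis or a reformulation in terms of embeddings --- this is precisely the distinction that \cite{NVT-} has to address. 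So your outline is faithful to the intended KPT-style argument, but it is a sketch of a proof, not a proof, and the two places requiring genuine care are left unverified.
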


We will also need the following variant of the notion of ordering property: 

\begin{defn}
\label{def:weak ordering property}
Let $\{<\}\subset L$ be a signature, $L_{0}=L\setminus\{<\}$, $K_{0}$ a Fra\"{i}ss\'e class in $L_{0}$, $K$ an order Fra\"{i}ss\'e expansion of $K_{0}$ in $L$. We say that $K$ satisfies the \textbf{weak
ordering property} relative to $K_{0}$ if for every $A_{0}\in K_{0}$, there is $B_{0}\in K_{0}$,
such that for every linear ordering $\prec$ on $A_{0}$ with $A=\langle A,\prec\rangle\in K$
and linear ordering $\prec'\in Fix_{X_{K}}(G)$ we have $A\hookrightarrow\langle B_{0},\prec'|B_{0}\rangle$.
\end{defn}
The following claim appears in the proof of Theorem 7.4 of \cite{KPT05}:
\begin{claim}
\label{claim}Let $<$ be a linear ordering on $F_{0}$. Then $<_{0}\in\overline{G_{0}<}$
if and only if for every $A\in K$ there is a finite substructure
$C_{0}$ of $F_{0}$ such that $C=\langle C_{0},<|C_{0}\rangle\cong A$. \end{claim}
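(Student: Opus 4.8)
The plan is to derive both implications from two standard facts about the Fra\"{i}ss\'e limits involved: that $K=Age(F)$, so that the finite substructures of $F$ are exactly the members of $K$ (and every $A\in K$ embeds in $F$); and that $F_{0}=F|L_{0}=Flim(K_{0})$ is ultrahomogeneous. Throughout I would fix the convention $(g\cdot\prec)(a,b)\Leftrightarrow\prec(g^{-1}a,g^{-1}b)$ for the $G_{0}$-action on $LO(F_{0})$, and use that convergence in $LO(F_{0})$ is pointwise: a net $\prec_{n}$ tends to $\prec$ iff for every finite $S\subset F_{0}$ one eventually has $\prec_{n}|S=\prec|S$. Hence $<_{0}\in\overline{G_{0}<}$ is equivalent to the statement that for every finite $S\subset F_{0}$ there is $g\in G_{0}$ with $(g\cdot<)|S=<_{0}|S$, and I would reduce everything to this reformulation.

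For the forward direction, assume $<_{0}\in\overline{G_{0}<}$ and fix $A\in K$. Since $K=Age(F)$, $A$ embeds into $F$; let $C$ be its image, a finite substructure of $F$, and let $C_{0}=C|L_{0}$, a finite substructure of $F_{0}$, so that $\langle C_{0},<_{0}|C_{0}\rangle=C\cong A$ as $L$-structures. Applying the reformulation with $S=C_{0}$, choose $g\in G_{0}$ with $(g\cdot<)|C_{0}=<_{0}|C_{0}$. Then $g^{-1}$ restricts to an $L_{0}$-isomorphism of $C_{0}$ onto the finite substructure $D_{0}:=g^{-1}C_{0}$ of $F_{0}$, and the chosen equality of orderings says exactly that this map carries $<_{0}|C_{0}$ onto $<|D_{0}$; therefore $\langle D_{0},<|D_{0}\rangle\cong\langle C_{0},<_{0}|C_{0}\rangle\cong A$, which is what we want.

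For the converse, assume every $A\in K$ is realized as $\langle C_{0},<|C_{0}\rangle$ for some finite substructure $C_{0}$ of $F_{0}$, and fix a finite $S\subset F_{0}$. The $L$-structure $A:=\langle S,<_{0}|S\rangle$, namely the $L_{0}$-structure induced by $F_{0}$ together with $<_{0}|S$, is a finite substructure of $F$, hence lies in $K=Age(F)$. By hypothesis there is a finite substructure $C_{0}$ of $F_{0}$ together with an $L$-isomorphism $\psi:\langle S,<_{0}|S\rangle\to\langle C_{0},<|C_{0}\rangle$; in particular $\psi$ is an $L_{0}$-isomorphism between two finite substructures of $F_{0}$, so by ultrahomogeneity of $F_{0}$ it extends to some $g\in Aut(F_{0})=G_{0}$ with $g|S=\psi$. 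Unwinding the definitions, for $a,b\in S$ we get $a<_{0}b\Leftrightarrow\psi(a)<\psi(b)\Leftrightarrow g(a)<g(b)\Leftrightarrow(g^{-1}\cdot<)(a,b)$, so $(g^{-1}\cdot<)|S=<_{0}|S$ with $g^{-1}\in G_{0}$. Since $S$ was arbitrary, $<_{0}\in\overline{G_{0}<}$.

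The argument is essentially bookkeeping, and I do not expect a genuine obstacle; the only point that needs care is keeping the direction of the $G_{0}$-action on $LO(F_{0})$ consistent with the direction in which ultrahomogeneity supplies the extending automorphism, so that one correctly ends up with $g$ versus $g^{-1}$ in each implication. Fixing the action convention once at the start and tracking it makes this routine.
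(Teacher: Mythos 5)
Your proof is correct. The paper itself does not prove this claim but simply quotes it from the proof of Theorem 7.4 of \cite{KPT05}, and your argument (pointwise-convergence reformulation of $<_{0}\in\overline{G_{0}<}$, plus $K=Age(F)$ and ultrahomogeneity of $F_{0}$, with careful tracking of the logic action) is exactly the standard argument given there.
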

\begin{prop}
\label{thm:weak ordering->ordering property}Assume $K$ satisfies the weak ordering
property relative to $K_{0}$, and that $(K_{0}, K)$ has the relative Ramsey property. Then $K$ satisfies the ordering property.\end{prop}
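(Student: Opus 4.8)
The plan is to translate the combinatorial hypotheses into the dynamical language developed in the paper, apply Theorem~\ref{thm:Fix_transitivity-->minimality}, and then translate back. First I would observe that by the Claim preceding Definition~\ref{def:weak ordering property}, the relative Ramsey property of $(K_0,K)$ says exactly that $(G_0,G)$ is relatively extremely amenable; so to invoke Theorem~\ref{thm:Fix_transitivity-->minimality} and conclude that $X_K$ is the universal minimal space of $G_0$ (hence in particular minimal), it suffices to show that the weak ordering property forces $Fix_{X_K}(G)$ to be transitive w.r.t.\ $X_K$. Once $X_K$ is known to be minimal, Theorem~7.4 of \cite{KPT05} gives that $K$ has the ordering property, which is the desired conclusion. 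So the real content is the implication: weak ordering property $+$ (relative Ramsey, used only to know $Fix_{X_K}(G)\neq\emptyset$ and is ``large enough'') $\Rightarrow$ $Fix_{X_K}(G)$ transitive w.r.t.\ $X_K$.

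To prove transitivity of $Fix_{X_K}(G)$, I would fix an arbitrary $\prec'\in Fix_{X_K}(G)$ and show $\overline{G_0\prec'}=X_K$. Since $X_K=\overline{G_0 <_0}$, it is enough to show $<_0\in\overline{G_0\prec'}$, because then $\overline{G_0<_0}\subseteq\overline{G_0\prec'}\subseteq X_K$ and we get equality. Here is where Claim~\ref{claim} enters (applied with the roles of the orderings swapped, i.e.\ with $\prec'$ in place of the ``$<$'' of the claim): $<_0\in\overline{G_0\prec'}$ holds if and only if for every $A\in K$ there is a finite substructure $C_0$ of $F_0$ with $\langle C_0,\prec'|C_0\rangle\cong A$. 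Now given $A\in K$, write $A=\langle A_0,\prec\rangle$ with $A_0\in K_0$, apply the weak ordering property to get $B_0\in K_0$ with $A\hookrightarrow\langle B_0,\prec'|B_0\rangle$; since $F_0=Flim(K_0)$ is universal for $K_0$, there is an embedding of $B_0$ into $F_0$, and we may use it to realize $B_0$ as a finite substructure of $F_0$. One must check this embedding can be taken to be $\prec'$-order-preserving onto its image, or rather that the copy of $A$ sitting inside $\langle B_0,\prec'|B_0\rangle$ lands, via the $B_0\hookrightarrow F_0$ embedding, onto a substructure $C_0$ of $F_0$ with $\langle C_0,\prec'|C_0\rangle\cong A$: this is a diagram-chase using that $\prec'$ restricted to any finite set is just a linear order and the embeddings of $L_0$-structures automatically interact correctly with the restriction of $\prec'$. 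Thus every $A\in K$ is realized as $\langle C_0,\prec'|C_0\rangle$ for some finite $C_0\subseteq F_0$, and Claim~\ref{claim} gives $<_0\in\overline{G_0\prec'}$.

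Having shown every element of $Fix_{X_K}(G)$ generates a dense orbit, i.e.\ $Fix_{X_K}(G)$ is transitive w.r.t.\ $X_K$, I would then note that the relative Ramsey hypothesis gives relative extreme amenability of $(G_0,G)$, so both hypotheses of Theorem~\ref{thm:Fix_transitivity-->minimality} are met; hence $X_K$ is the universal minimal space of $G_0$, and in particular $X_K$ is minimal. Finally, by Theorem~7.4 of \cite{KPT05}, minimality of $X_K$ is equivalent to $K$ having the ordering property, which completes the proof.

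The step I expect to be the main obstacle is the precise bookkeeping in the middle paragraph: making sure that the weak ordering property, whose statement involves $\langle B_0,\prec'|B_0\rangle$ for $\prec'$ ranging over $Fix_{X_K}(G)$, dovetails exactly with what Claim~\ref{claim} needs (realizing each $A\in K$ as an initial-segment-style substructure of $F_0$ equipped with the restriction of a fixed ordering), and in particular that one may legitimately transport the embedding $A\hookrightarrow\langle B_0,\prec'|B_0\rangle$ across the embedding $B_0\hookrightarrow F_0$ while keeping control of the ordering — one has to be careful that $\prec'$ is a fixed ordering on $F_0$ (an element of $Fix_{X_K}(G)\subseteq X_K\subseteq LO(F_0)$) and not merely an abstract order on $B_0$, so that the restriction to the image $C_0$ is the ``right'' order. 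I do not expect any deep difficulty here, only that the argument be written so the reader sees the roles of the two claims and of Theorem~7.4 clearly; the relative Ramsey property itself is used only through the quoted Claim to get relative extreme amenability.
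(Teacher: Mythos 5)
Your proposal is correct and takes essentially the same route as the paper: relative Ramsey property gives relative extreme amenability and hence universality of $X_{K}$, the weak ordering property combined with Claim \ref{claim} shows that every $\prec'\in Fix_{X_{K}}(G)$ has dense $G_{0}$-orbit, Proposition \ref{thm:rea+transitive-->universal minimal} then yields minimality, and Theorem 7.4 of \cite{KPT05} converts minimality into the ordering property. The bookkeeping you flag in the middle paragraph is exactly the step the paper dispatches by citing ``the same argument as in the proof of Theorem 7.4 of \cite{KPT05}'', so there is no substantive difference.
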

\begin{proof}
Again, the universality of $X_{K}$ is equivalent
to the fact that $(G_{0},G)$ is relatively extremely amenable, which is in turn equivalent to $(K_{0}, K)$ having the relative Ramsey property. By
Theorem 7.4 of \cite{KPT05} the minimality of $X_{K}$ is equivalent
to the ordering property of $K$ (relative to $K_{0}$). By Proposition \ref{thm:rea+transitive-->universal minimal}
in order to establish $X_{K}$ is minimal, it is enough to show that
$Fix_{X_{K}}(G)$ is transitive w.r.t $X_{K}$. Let $<\in Fix_{X_{K}}(G)$.
It is enough to show $<_{0}\in\overline{G<}$. Fix $A\in K.$ As $K$
satisfies the weak ordering property, there is $B_{0}$ as in Definition
\ref{def:weak ordering property} such that $A\hookrightarrow\langle B_{0},<|B_{0}\rangle$.
Using the same argument as in the proof of Theorem 7.4 of \cite{KPT05},
we notice that there is a substructure $C$ of $B$ isomorphic to $A$.
Denote $C_{0}=C|L_{0}$ and notice $C=\langle C_{0},<|C_{0}\rangle\cong A$
. We now use Claim \ref{claim}.
\end{proof}

\begin{thm}
$K$ has the weak ordering property and $(K_{0}, K)$ has the relative Ramsey property if and only if $X_{K}$ is the
universal minimal space of $G_{0}$. \end{thm}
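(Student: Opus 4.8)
The plan is to deduce this final theorem by combining the earlier equivalences so that both combinatorial hypotheses translate into the two dynamical conditions appearing in Theorem \ref{thm:Fix_transitivity-->minimality}. Concretely, the relative Ramsey property of $(K_0,K)$ is, by the Claim stated just above, equivalent to $(G_0,G)$ being relatively extremely amenable; in particular (via \cite{NVT-}, as already invoked) this is equivalent to $X_K$ being $G_0$-universal. So the only remaining work is to show that, under the relative Ramsey property, the weak ordering property of $K$ is equivalent to $Fix_{X_K}(G)$ being transitive with respect to $X_K$. Once that equivalence is in hand, Theorem \ref{thm:Fix_transitivity-->minimality} gives the conclusion.

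First I would record the easy direction of the new equivalence, reusing Proposition \ref{thm:weak ordering->ordering property}: if $K$ has the weak ordering property and $(K_0,K)$ has the relative Ramsey property, then (as proved there) $Fix_{X_K}(G)$ is transitive w.r.t.\ $X_K$; together with universality of $X_K$ this yields minimality of $X_K$, hence $X_K=U_{G_0}$. Indeed this is almost verbatim the content of Proposition \ref{thm:weak ordering->ordering property} combined with Theorem \ref{thm:Fix_transitivity-->minimality}, so this half of the proof is essentially already done and just needs to be assembled.

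For the converse, suppose $X_K$ is the universal minimal space of $G_0$. Then $X_K$ is $G_0$-universal, so by the Claim $(K_0,K)$ has the relative Ramsey property; and $X_K$ is minimal, so by Proposition \ref{thm:rea+transitive-->universal minimal} $Fix_{X_K}(G)$ is transitive w.r.t.\ $X_K$. It remains to extract the weak ordering property from this transitivity. Here I would argue combinatorially in the style of the proof of Theorem 7.4 of \cite{KPT05}: fix $A_0\in K_0$; for each linear ordering $\prec'\in Fix_{X_K}(G)$, transitivity says $\overline{G_0\prec'}=X_K$, hence $<_0\in\overline{G_0\prec'}$, and so by Claim \ref{claim} (applied with $\prec'$ in the role of the order on $F_0$) every $A\in K$ embeds into a finite substructure of $\langle F_0,\prec'|F_0\rangle$. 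The task is then to compactify this into a single finite $B_0\in K_0$ that works uniformly for all such $\prec'$: take all $\prec$ on $A_0$ with $\langle A_0,\prec\rangle\in K$ (finitely many), realize each inside some finite $\langle B_0^{\prec},\prec'|B_0^{\prec}\rangle$, and then close off — using a suitable amalgamation/joint-embedding argument in $K_0$ together with the finiteness of $Fix_{X_K}(G)$'s "local types" — to a single $B_0$; this is the weak ordering property of Definition \ref{def:weak ordering property}.

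The main obstacle I anticipate is precisely this last uniformization step in the converse direction: passing from "each $A=\langle A_0,\prec\rangle$ embeds into some finite piece of each $\prec'\in Fix_{X_K}(G)$" to "there is one finite $B_0\in K_0$ working for all $\prec$ and all $\prec'$ simultaneously". A clean way to handle it is to note that the relevant data attached to a finite substructure $C_0$ of $F_0$ together with the restriction of an order in $Fix_{X_K}(G)$ is itself an element of $K$ (an element of the orbit closure), so the family of possibilities is governed by the finitely many isomorphism types in $K$ on a given finite underlying $L_0$-structure; a standard König's lemma / compactness argument over $X_K$ then produces the uniform $B_0$. Modulo this bookkeeping, which mirrors arguments already present in \cite{KPT05} and in Proposition \ref{thm:weak ordering->ordering property}, the theorem follows by stringing together the Claim, Proposition \ref{thm:rea+transitive-->universal minimal}, and Theorem \ref{thm:Fix_transitivity-->minimality}.
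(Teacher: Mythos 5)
Your proof is correct in outline, and the direction ``weak ordering property $+$ relative Ramsey property $\Rightarrow$ $X_{K}=U_{G_{0}}$'' is exactly the paper's argument (assemble Proposition \ref{thm:weak ordering->ordering property} with Theorem \ref{thm:Fix_transitivity-->minimality}). Where you diverge is the converse. The paper disposes of it in one line: if $X_{K}$ is the universal minimal space of $G_{0}$, then by Theorem 10.8 of \cite{KPT05} the class $K$ has the \emph{full} ordering property and the \emph{full} Ramsey property, and these imply the weak ordering property and the relative Ramsey property a fortiori (any $\prec'\in Fix_{X_{K}}(G)$ restricts on $B_{0}$ to a $K$-admissible ordering, so the ordering property's conclusion applies verbatim). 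You instead derive the relative Ramsey property from universality via the Claim and then try to extract the weak ordering property directly from the transitivity of $Fix_{X_{K}}(G)$, which forces you into the uniformization step you flag as the main obstacle. That step can in fact be completed --- for fixed $A_{0}$ there are only finitely many admissible $\prec$, the condition ``every $\langle A_{0},\prec\rangle$ embeds into $\langle B_{0},\prec''|B_{0}\rangle$'' depends on only finitely many coordinates of $\prec''$ and is therefore open, and $Fix_{X_{K}}(G)$ is a closed, hence compact, subset of $X_{K}$, so a finite subcover plus taking a finite substructure of $F_{0}$ containing the finitely many witnesses $B_{0}^{\prec'_{i}}$ yields the uniform $B_{0}$ --- but it is work the paper never has to do. In short: your route is sound and self-contained on the dynamical side, but you overlooked that the strong combinatorial properties are already available from \cite{KPT05} under the hypothesis, which trivializes the converse.
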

\begin{proof}
By Theorem 10.8 of \cite{KPT05}, if $X_{K}$ is the universal minimal
space of $G_{0}$ then $K$ satisfies the ordering property, a fortiori,
$K$ satisfies the weak ordering property. In addition $K$ satisfies
the Ramsey property which implies $(K_{0}, K)$ has the relative Ramsey property. The reverse direction follows from Proposition \ref{thm:weak ordering->ordering property}.
\end{proof}

\subsection{A question.}

\label{sub:Conjecture.}

We mentioned previously that the concept of relative extreme amenability was introduced in order to know whether $X_{K}$ being universal is equivalent to $K$ having the Ramsey property. By Theorem 4.7 of \cite{KPT05}, the Ramsey property of
$K$ is equivalent to $G$ being extremely amenable. We still do not know the answer to the following question from \cite{KPT05}: 

\begin{question}\label{Q1} Let $\{<\}\subset L$ be a signature, $L_{0}=L\setminus\{<\}$, $K_{0}$ a Fra\"{i}ss\'e class in $L_{0}$, $K$ an order Fra\"{i}ss\'e expansion of $K_{0}$ in $L$. Does universality for $X_{K}$ imply that $G$ is extremely amenable (equivalently, that $K$ has the Ramsey property)?\end{question}

Moreover, in view of the notions we introduced previously, we ask: 

\begin{question}
Assume the previous question has a negative answer. Does there exist an extremely amenable interpolant for the pair $(G_{0},G)$? 
\end{question}

As a final comment, and in view of Remark \ref{ord}, it should be mentioned that Question \ref{Q1} has a negative answer when $K$ is not an order expansion of $K_{0}$, see \cite{NVT-}.

\bibliographystyle{alpha} \bibliographystyle{alpha} \bibliographystyle{alpha}
\bibliographystyle{alpha}
\bibliography{bib2.bib}

\address{Yonatan Gutman, Laboratoire d'Analyse et de Math\'ematiques
Appliqu\'ees,
Universit\'e de Marne-la-Vall\'ee, 5 Boulevard Descartes, Cit\'e Descartes
- Champs-sur-Marne, 77454 Marne-la-Vall\'ee cedex 2, France \&
Institute of Mathematics, Polish Academy of Sciences,
ul. \'{S}niadeckich~8, 00-956 Warszawa, Poland.}

\email{y.gutman@impan.pl, yonatan.gutman@univ-mlv.fr}

\address{Lionel Nguyen Van Th\'e, Aix Marseille Universit\'e, CNRS, LATP, UMR 7353, 
13453 Marseille France}

\email{lionel@latp.univ-mrs.fr}

\end{document}